\documentclass[12pt]{amsart}

\usepackage{amsmath, amsthm, amssymb}
\usepackage{geometry}
\usepackage{hyperref}
\usepackage{enumerate}
\usepackage{cite}
\usepackage{ytableau}
\usepackage{adjustbox}
\usepackage{tikz}
\usepackage{float}
\usepackage{bm}
\usepackage{longtable}
\usepackage{enumitem}
\usepackage{mathtools}

\usetikzlibrary{decorations.pathreplacing}

\numberwithin{equation}{section} 
\newtheorem{theorem}{Theorem}[section]
\newtheorem*{maintheorem}{Main Theorem}
\newtheorem{lemma}[theorem]{Lemma}
\newtheorem{proposition}[theorem]{Proposition}

\theoremstyle{definition}
\newtheorem{definition}[theorem]{Definition}
\theoremstyle{remark}

\newtheorem{example}[theorem]{Example}

\newcommand{\shiftnoarrow}[2]{\ensuremath \raisebox{#1cm}{${#2}$}}
\newcommand{\YoungWithData}[2]{%
	\begin{tabular}{@{}c@{}}
		#1 \\[-0.6em]
		{\small $#2$}
	\end{tabular}
}

\title{Verma Bases and Kashiwara-Nakashima Tableaux of $\mathfrak{sp}_4$}

\author{Bintao Cao$^1$}
\address{1. School of Mathematics and Statistics, Yunnan University, Kunming 650500, China}
\email{btcao@ynu.edu.cn}

\author{Ye Huang$^2$}
\address{2. School of Mathematics and Statistics, Yunnan University, Kunming 650500, China}
\email{huangye2461@163.com}

\date{\today}

\begin{document}
	
	\begin{abstract}
			We construct a one-to-one correspondence between the Verma basis vectors of a finite dimensional irreducible representation $L(\lambda)$ of the symplectic Lie algebra $\mathfrak{sp}_4$ and the Kashiwara-Nakashima tableaux of $\mathfrak{sp}_4$ with shape $\lambda $ naturally.
		We also give a proof of the linear independence of the Verma vector system directly.
	\end{abstract}
	
	\maketitle
	
	\section{Introduction}
	Finding some ``good'' bases is 
	an important problem in representation theory. 
	Various good bases have already been found,
	and have been used to deal with all kinds of problems in the representation theory of Lie groups, Lie algebras, quantum groups and Hecke algebras.
	For example, the well known bases,
	called canonical and dual canonical bases,
	introduced by Kazhdan and Lusztig\cite{KazhdanLusztig1979} can give the characteristic formulas for some important modules in
	BGG category $\mathcal O,$ such as irreducible modules,
	projective modules and tilting modules. 
	The crystal bases for the integrable
	modules of the quantized enveloping algebras of some
	symmetrizable Kac-Moody Lie algebras 
	contain lots of information of these modules at $q=0.$ 
	They are well-behaved 
	with respect to tensor products, 
	and thus can be used to decompose tensor modules.
	They are constructed by Lusztig~\cite{lusztig1990canonical} and Kashiwara~\cite{kashiwara1990crystalizing,kashiwara1991}. 
	We know that the crystal bases 
	also have important connection with combinatorial bases of the tableaux.
	Gelfand-Tsetlin bases, given by Gelfand and Tsetlin~\cite{gelfand1, gelfand2},
	are used to study the branching theory.
	
	In many scenes, monomial bases are useful and beautiful.
	The Verma basis is a monomial basis of certain highest weight modules expressed in terms of the negative simple root vectors, 
	which depends on the reduced expression of the longest element of the Weyl group. 
	We introduce this basis of finite dimensional irreducible module here briefly. 
	
	Let $\mathfrak{g}$ be a finite dimensional simple Lie algebra over $\mathbb C.$
	We write $L(\lambda)$ the irreducible module with the highest weight $\lambda,$
	and $v_{\lambda}$ a highest weight vector with weight $\lambda.$
	Let $w_0=r_{i_1}r_{i_2}\cdots r_{i_t}$ be the longest element in the Weyl group of $\mathfrak{g}$,
	where $r_{i_j}'s$ are simple reflections,
	and this expression has the shortest length.
	Then the Verma basis of $L(\lambda)$ has the form 
	$$f^{a_{i_1}}_{i_1}f^{a_{i_2}}_{i_2}\cdots f^{a_{i_t}}_{i_t}v_{\lambda}.$$
	Here, the indices $a_{i_j}$ satisfy some inequalities.
	These inequalities can be explained as just the conditions for  tableaux to be certain types of tableaux. 
	For example, in type $A,$
	they are the conditions for the tableaux to
	be standard Young tableaux.

	Verma basis first appeared in the work of Li, Moody, Nicolescu, and Patera\cite{li1986verma} in 1986 (due to D-N. Verma).
	They worked out the Verma bases for finite dimensional irreducible representations of the Lie algebras \( A_n (n \geq 1), B_n (2 \leq n \leq 6), C_n (2 \leq n \leq 6), D_n (4 \leq n \leq 6), \) and \( G_2 \). But the proof of the linear independence of these basis vectors is not so clear.
	Raghavan and Sankaran~\cite{Raghavan1999} gave a proof for  \( A_n \). 
	Po{\v{s}}ta and Havl{\'\i}{\v{c}}ek~\cite{posta2013note} discussed the construction of the Verma bases of the Verma modules of  \( A_n \). 
	Hall~\cite{hall1987verma} presented the Verma bases for Verma modules of $A_n$ and some other types of Lie algebras in his Thesis. 
	For the application, the  Verma basis has been used in Quantum Chemistry
	by Paldus and Planelles~\cite{Paldus2018}.
	
	In this paper, we consider Verma bases of finite dimensional irreducible modules of the finite dimensional simple Lie algebra $\mathfrak{sp}_4$ over $\mathbb C$. 
	We give a natural one-to-one correspondence between the set of Verma vectors and the set of Kashiwara-Nakashima tableaux
	in this case,
	via the transformation of the inequalities in the expression of Verma basis to the conditions for tableaux to
	be Kashiwara-Nakashima tableaux naturally. 
	Then we give a direct proof of the linear
	independence of the Verma vector system.
	Exactly, we have the following result.
	
	Let $L(\lambda)$ be the finite dimensional irreducible representation of $\mathfrak{sp}_4$ with highest weight $\lambda$. We write $\lambda = m_1\omega_1 + m_2\omega_2 = (m_1+m_2)\epsilon_1 + m_2\epsilon_2$ for $m_1, m_2 \in \mathbb{Z}_{\geq 0}$, where $\omega_1, \omega_2$ are the fundamental weights. 
	We also write 
	$\lambda=(\lambda_1,\lambda_2)=(m_1+m_2,m_2)$
	be a partition for convenience.
	Let the nonzero vector $v_{\lambda}$ be the highest weight vector of $L(\lambda)$. 
	Then the Verma vector system of $L(\lambda)$ 
	is defined by
	\begin{equation} \label{def:F}
	F:= \left\{ f_1^{a_4} f_2^{a_3} f_1^{a_2} f_2^{a_1} v_{\lambda} \;\middle|\;
	\begin{aligned}
		a_i &\in \mathbb Z_{\ge0}, \;i=1,2,3,4,\\
		0 &\leq a_1 \leq m_2, \\
		0 &\leq a_2 \leq m_1 + 2a_1, \\
		0 &\leq a_3 \leq \min\left\{ \tfrac{1}{2}(a_2 + m_1), a_2 \right\}, \\
		0 &\leq a_4 \leq \min\{ m_1, a_3 \}
	\end{aligned}
	\right\}, 
	\end{equation}
		where $f_1 = E_{21} - E_{34}$ is the root vector corresponding to the negative simple root $\epsilon_2 - \epsilon_1$ of $\mathfrak{sp}_4$, and $f_2 = E_{42}$ is the root vector corresponding to the negative simple root $-2\epsilon_2$. 
	
	\begin{maintheorem}
		Let $L(\lambda)$ be the finite dimensional irreducible representation of $\mathfrak{sp}_4$ with highest weight $\lambda$. Let $F$ be the Verma vector system of  $L(\lambda)$ (see \eqref{def:F}).
		Then 
		\begin{itemize}
		\item[i).] There exists a weight-preserving one-to-one correspondence between $F$ and the set of Kashiwara-Nakashima tableaux of $\mathfrak{sp}_4$ with the shape $\lambda$.
		\item[ii).] $F$ is a basis of $L(\lambda).$ 
		
	\end{itemize}
	\end{maintheorem}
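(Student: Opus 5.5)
The proof splits into a combinatorial part (i) and a linear-algebra part (ii), with (i) feeding into (ii) through a dimension count.

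For (i), I will describe KN tableaux of shape $\lambda=(\lambda_1,\lambda_2)$ for $\mathfrak{sp}_4$ explicitly. They are fillings with the alphabet $1<2<\bar 2<\bar 1$ that are semistandard: rows weakly increase, columns strictly increase. In addition, each two-box column must be admissible, which forbids the column $(1,\bar 1)$. Each adjacent pair of columns must satisfy the KN (split-column) condition; for $n=2$ this reduces to one extra forbidden configuration, namely a column $(2,\bar 2)$ sitting directly to the left of a column beginning with $\bar 2$ in a particular position. Such a tableau is determined by a handful of counts:
\begin{itemize}
\item in row 2 (length $m_2$), the numbers of entries $2$, $\bar 2$ and $\bar 1$;
\item in row 1 (length $m_1+m_2$), the numbers of entries $1$, $2$, $\bar 2$ and $\bar 1$.
\end{itemize}
The constraints among these counts are linear inequalities. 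The weight is $(\#1-\#\bar1,\ \#2-\#\bar2)$.

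The Verma vector $f_1^{a_4}f_2^{a_3}f_1^{a_2}f_2^{a_1}v_\lambda$ has weight $\lambda-(a_2+a_4)\alpha_1-(a_1+a_3)\alpha_2$, where $\alpha_1=\epsilon_1-\epsilon_2$ and $\alpha_2=2\epsilon_2$. I will build the correspondence by reading the $a_i$ as crystal-like moves on the highest tableau $T_\lambda$, which has row 1 all $1$'s and row 2 all $2$'s:
\begin{itemize}
\item $a_1$ changes $a_1$ of the $2$'s in row 2 into $\bar 2$ (so $a_1\le m_2$);
\item $a_2$ changes entries via $1\to2$ and $\bar2\to\bar1$; the available supply is $m_1$ first-row $1$'s plus the $a_1$ $\bar 2$'s, each of which can eventually be lowered, giving the bound $a_2\le m_1+2a_1$;
\item $a_3$ turns $2$'s into $\bar2$'s;
\item $a_4$ performs $1\to 2$ and $\bar2\to\bar1$ again.
\end{itemize}
Concretely, I will write down an explicit piecewise-linear map from $(a_1,a_2,a_3,a_4)$ to the count vector of a tableau and check that the Verma inequalities correspond exactly to the semistandard-plus-KN inequalities. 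The minima $\min\{\tfrac12(a_2+m_1),a_2\}$ and $\min\{m_1,a_3\}$ will require case splits, according to whether $a_2\le m_1$. The inverse map reads $a_1$ off from row-2 data and then recovers $a_2,a_3,a_4$ successively from the counts. Weight preservation is then a direct check, since both sides are determined by $\#1-\#\bar1$ and $\#2-\#\bar2$.

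For (ii), by Kashiwara–Nakashima the number of KN tableaux of shape $\lambda$ equals $\dim L(\lambda)$, and moreover does so weight by weight. So by (i), $|F_\mu|=\dim L(\lambda)_\mu$ for every weight $\mu$, and it suffices to prove that $F$ is linearly independent, or alternatively that it spans. I will prove linear independence directly, weight space by weight space.

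For independence, I will realize $L(\lambda)$ concretely as the submodule generated by $v_{\omega_1}^{\otimes m_1}\otimes v_{\omega_2}^{\otimes m_2}$ inside $V^{\otimes m_1}\otimes (\wedge^2 V)_0^{\otimes m_2}$, where $V=\mathbb C^4$. I will expand each Verma vector in the tensor basis indexed by fillings. Then I will define a total order on fillings (for instance, lexicographic on the column-reading word after sorting) and show that the leading term of the Verma vector indexed by $(a_i)$ is exactly the filling corresponding to its KN tableau under (i), with nonzero coefficient. Since distinct tableaux give distinct leading terms, triangularity yields independence.

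The main obstacle is this leading-term computation: $f_1$ and $f_2$ act by derivations, and I must show that no cancellation kills the predicted top term. I will try first to choose the order so that each successive $f$-power acts "greedily" on the leftmost admissible tensor factors, making the top coefficient a product of positive binomial-type factors. If that fails near the KN boundary, where $a_3$ is close to $\tfrac12(a_2+m_1)$, I will fall back on an $\mathfrak{sl}_2$-string argument: fix $a_1,a_2$ and use $e_1,e_2$ to reduce independence to the fact that $f_i^{k}u\neq0$ for $k\le\langle\mathrm{wt}\,u,\alpha_i^\vee\rangle$ whenever $u$ is $e_i$-primitive modulo lower terms.
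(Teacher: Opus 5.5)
Your overall architecture is the paper's. In (i) it is an explicit count-based bijection with a case split on $a_2$ versus $m_1$. Its inverse reads off $a_1$ = number of entries $>2$ in row~2, $a_2$ = number of entries $>1$ in row~1 plus number of $\overline{1}$'s in row~2, $a_3$ = number of entries $>2$ in row~1, and $a_4$ = number of $\overline{1}$'s in row~1. In (ii) it is triangularity inside $V^{\otimes m_1}\otimes(\wedge^2V)^{\otimes m_2}$, combined with $|\mathrm{KN}_\lambda(4)|=\dim L(\lambda)$.

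The genuine problem is the one piece of combinatorics you commit to: the adjacent-column condition for $n=2$. This is exactly the input that produces the bound on $a_3$. Write $(x,y)$ for a column with $x$ above $y$. With columns of height at most $2$, both configurations of condition (iii) can only occur with $i=j=2$ ($i=j=1$ is already excluded by (ii)). Semistandardness then forces \emph{both} adjacent columns to equal $(2,\overline{2})$. Since $(q-p)+(s-r)<j-i=0$ can never hold, the only extra forbidden pattern is two adjacent columns $(2,\overline{2})$. Equivalently, a KN tableau has at most one column $(2,\overline{2})$.

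A column $(2,\overline{2})$ followed by a column beginning with $\overline{2}$ is allowed. Writing tableaux row by row, the paper's list for $\lambda=3\epsilon_1+2\epsilon_2$ contains $\left[\begin{smallmatrix}1&2&\overline{2}\\ 2&\overline{2}\end{smallmatrix}\right]$ and $\left[\begin{smallmatrix}2&\overline{2}&\overline{2}\\ \overline{2}&\overline{1}\end{smallmatrix}\right]$. With your rule the bijection already fails for $\lambda=2\omega_2$:
\begin{itemize}
\item $\left[\begin{smallmatrix}2&2\\ \overline{2}&\overline{2}\end{smallmatrix}\right]$ and $\left[\begin{smallmatrix}1&2\\ \overline{2}&\overline{1}\end{smallmatrix}\right]$ would both be admitted, and both have $(a_1,a_2,a_3,a_4)=(2,2,0,0)$;
\item $\left[\begin{smallmatrix}2&\overline{2}\\ \overline{2}&\overline{1}\end{smallmatrix}\right]$, the tableau of the admissible index $(2,3,1,0)$, would be discarded.
\end{itemize}
With the correct rule, let $\delta\in\{0,1\}$ be the number of $(2,\overline{2})$ columns and $x$ the number of $\overline{1}$'s in row~2. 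If $\delta+x>0$, every one-box column has entry $>1$, so $a_2=m_1+\delta+2x$ and $a_3\le m_1+x=\tfrac12(a_2+m_1-\delta)$. If $\delta+x=0$, then $a_3\le a_2\le m_1$. This is exactly where $\min\{\tfrac12(a_2+m_1),a_2\}$ and the paper's three cases ($a_2-m_1$ odd, $a_2-m_1$ even, $a_2<m_1$) come from.

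For (ii), your plan is the paper's Lemma--Proposition--Theorem sequence, and the obstacle you anticipate largely disappears. Use the paper's normalization $\varepsilon_{\overline{2}}=\varepsilon_4$, $\varepsilon_{\overline{1}}=-\varepsilon_3$. Then $f_1\varepsilon_1=\varepsilon_2$, $f_1\varepsilon_{\overline{2}}=\varepsilon_{\overline{1}}$, $f_2\varepsilon_2=\varepsilon_{\overline{2}}$, and all other $f_i\varepsilon_j=0$. Each move goes to the next letter of $1<2<\overline{2}<\overline{1}$, so on $\varepsilon_j\wedge\varepsilon_k$ with $j<k$ it never jumps past $k$ and no reordering sign arises. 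Hence every Verma vector is a $\mathbb{Z}_{\ge0}$-combination of the $u(Y)$. Nothing can cancel, the top term automatically has a positive coefficient, and the $\mathfrak{sl}_2$-string fallback is unnecessary. What remains is only to show that the largest element of the support is $T(\mathbf{a})$.

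That holds only for a well-chosen order. The paper compares columns starting from the rightmost one (the first tensor factor), and within a column it compares the top entry first; this matches your idea of filling the leftmost tensor factors greedily. An order that reads a column's bottom entry first, as the usual column reading word does, already fails for $\lambda=\omega_2$. There $f_1f_2v_\lambda=\varepsilon_2\wedge\varepsilon_{\overline{2}}+\varepsilon_1\wedge\varepsilon_{\overline{1}}$ would get the non-KN column $(1,\overline{1})$ as its top term.

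Finally, expand in the basis of $\wedge^2V$ itself, as the paper does, rather than in $(\wedge^2V)_0$, which has no basis indexed by column-strict fillings.
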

	
	By the way, the connection between the Verma vectors and the Kashiwara-Nakashima tableaux inspires us to give  Verma bases of irreducible modules of some Basic classical Lie superalgebras.
	In fact, 
	we have already given the Verma basis in the case of
	$\mathfrak{spo}(4|1)$, which will appear in our next paper.
	
	In Section 2, we introduce the structure of the symplectic Lie algebra $\mathfrak{sp}_{2n}$ and its Kashiwara-Nakashima tableaux.
	Then in Section 3, we give the basic properties of the
	Kashiwara-Nakashima tableaux of $\mathfrak{sp}_{4},$
	especially, the system of inequalities that makes a tableau a Kashiwara-Nakashima tableau.
	In Section 4, we give the connection of the Verma basis of the finite dimensional irreducible module to 
	the
	Kashiwara-Nakashima tableaux of $\mathfrak{sp}_{4}.$
	In Section 5, we show the linear independence of the Verma vector system.

	\section{Preliminaries}
	In this section, we introduce the symplectic Lie algebra $\mathfrak{sp}_{2n}$ and its Kashiwara-Nakashima tableaux (KN tableaux for short). The KN tableaux is developed by Kashiwara and Nakashima to label elements of symplectic crystal bases \cite{kashiwara1994crystal}.
	
	Let $\mathfrak{g}$ be the finite dimensional simple Lie algebra of type $C_n$ ($n \geq 2$). Then $\mathfrak{g}$ is realized as the symplectic Lie algebra \cite{humphreys1972}.
	
	\begin{equation}
		\mathfrak{g} =\mathfrak{sp}_{2n} = \left\{T = \begin{pmatrix}
			A & B \\
			C & -A^t
		\end{pmatrix}\in M_{2n\times 2n}(\mathbb{C}) \; \middle| \; 
		\begin{array}{l}
			A,B,C\in M_{n\times n}(\mathbb{C}), \\
			B^t = B,\; C^t = C
		\end{array}
		\right\}.
	\end{equation}
	
	Let $E_{ij}$ denote the $2n\times 2n$ elementary matrix having $1$ at the $(i,j)$-entry and $0$ elsewhere, and set 
	\begin{equation}
		\begin{aligned}
			e_i &= E_{i, i+1} - E_{n+i+1, n+i}, \quad f_i = E_{i+1, i} - E_{n+i, n+i+1}, \\
			h_i &= E_{ii} - E_{i+1, i+1} - E_{n+i, n+i} + E_{n+i+1, n+i+1}, \\
			e_n &= E_{n, 2n}, \quad f_n = E_{2n, n}, \quad h_n = E_{n,n} - E_{2n,2n}
		\end{aligned}
	\end{equation}
	for $i = 1, 2, \ldots, n-1$. Then, as a Lie algebra, $\mathfrak{g} = \mathfrak{sp}_{2n}$ is generated by $e_i, f_i, h_i$ ($i = 1,2, \ldots, n$).
	
	Consider the linear functionals $\epsilon_i: M_{2n \times 2n}(\mathbb{C}) \to \mathbb{C}$ defined by
	$$
	\epsilon_i(T) = t_{ii}, \quad T = (t_{rs}) \in M_{2n \times 2n}(\mathbb{C}), \quad r,s = 1,2,\ldots,2n.
	$$
	Then, the simple roots and the fundamental weights are expressed as
	\begin{equation}\label{simple roots}
		\begin{aligned}
			\alpha_i &= \epsilon_i - \epsilon_{i+1} & \text{for } i &= 1,2,\ldots,n-1, \\
			\alpha_n &= 2\epsilon_n, \\
			\omega_i &= \epsilon_1 + \epsilon_2 + \cdots + \epsilon_i & \text{for } i &= 1,2,\ldots,n.
		\end{aligned}
	\end{equation}
	
	The Dynkin diagram of the Lie algebra $\mathfrak{sp}_{2n}$ is shown in the figure below.

\begin{figure}[htbp]
	\centering
	\begin{tikzpicture}
		\tikzstyle{white node}=[circle, draw=black, thick, minimum size=0.4cm, inner sep=0pt, fill=white]
		
		\node[white node, label=below:$\alpha_1$] (n1) at (0, 0) {};
		\node[white node, label=below:$\alpha_2$] (n2) at (1.8, 0) {};
		\node[white node, label=below:$\alpha_3$] (n3) at (3.6, 0) {};
		
		\node (dots) at (5.4, 0) {$\cdots$};
		
		\node[white node, label=below:$\alpha_{n-2}$] (nm2) at (7.2, 0) {};
		\node[white node, label=below:$\alpha_{n-1}$] (nm1) at (9.0, 0) {};
		\node[white node, label=below:$\alpha_n$] (nn) at (10.8, 0) {};
		
		\draw[thick] (n1) -- (n2);
		\draw[thick] (n2) -- (n3);
		\draw[thick] (n3) -- (dots);
		\draw[thick] (dots) -- (nm2);
		\draw[thick] (nm2) -- (nm1);
		
		\draw[thick] ([yshift=0.08cm]nm1.east) -- ([yshift=0.08cm]nn.west);
		\draw[thick] ([yshift=-0.08cm]nm1.east) -- ([yshift=-0.08cm]nn.west);
		
		\draw[thick] (10.15, 0.25) -- (9.8, 0) -- (10.15, -0.25);
		
	\end{tikzpicture}
	\caption{The Dynkin diagram of the Lie algebra $\mathfrak{sp}_{2n}$}
\end{figure}
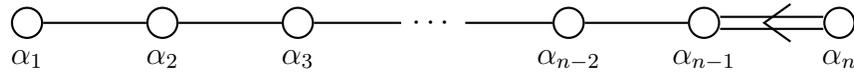

	A partition $\lambda$ of a positive integer $m$ is a $k$-tuple $\lambda=(\lambda_1,\ldots,\lambda_k)$, where $\lambda_1\geq\lambda_2\geq\cdots\geq\lambda_k > 0$ and $|\lambda|=\sum_{i = 1}^k\lambda_i=m$. The length of $\lambda$ is $\ell(\lambda)=k$ and the Young diagram of shape $\lambda$ is given by arranging $m$ boxes in $k$ left-justifed rows with $\lambda_i$ boxes in the $i$th row. Figure~\ref{Young diagram} shows a Young diagram of shape $(4,3,1)$.
	
	A semistandard Young tableau of shape $\lambda$ is a filling of the Young diagram of shape $\lambda$ with positive integers such that the entries are weakly increasing across each row and strictly increasing down each column. Figure~\ref{Young tableau} shows a semistandard Young tableau.
	\begin{figure}[h]
		\centering
		\begin{minipage}{0.45\textwidth}
			\centering
			\begin{ytableau}
				~ & ~ & ~ & ~ \\
				~ & ~ & ~ \\
				~
			\end{ytableau}
			\caption{Young diagram}
			\label{Young diagram}
		\end{minipage}
		\begin{minipage}{0.45\textwidth}
			\centering
			\begin{ytableau}
				1 & 2 & 3 & 3 \\
				3 & 3 & 4 \\
				4
			\end{ytableau}
			\caption{Young tableau}
			\label{Young tableau}
		\end{minipage}
	\end{figure}
	
	Let $\lambda = m_1\omega_1+\cdots + m_n\omega_n=\lambda_1\epsilon_1+\cdots+\lambda_n\epsilon_n$ ($m_i\in\mathbb{Z}_{\geq0}$) be a dominant integral weight. Using the relation \ref{simple roots}, $\lambda$ can be identified with the partition $\lambda = (\lambda_1, \lambda_2, \cdots, \lambda_n)$, where
	\begin{equation}
		\begin{aligned}
			\quad\lambda_1&=m_1 + m_2+\cdots + m_n,\\
			\lambda_2&=m_2+\cdots + m_n,\\
			&\vdots\\
			\lambda_n&=m_n.
		\end{aligned}    
	\end{equation}
	
	Finite dimensional irreducible highest weight representations of $\mathfrak{sp}_{2n}$ are indexed by partitions $\lambda$ with $\ell(\lambda) \leq n$. 
	Therefore, in the definition of KN tableaux of $\mathfrak{sp}_{2n}$,
 the condition $\ell(\lambda) \leq n$ is imposed. 
 KN tableaux of $\mathfrak{sp}_{2n}$ have entries from the set $$\mathcal{N}=\{1,2,\ldots,n,\overline{n},\ldots,\overline{2},\overline{1}\}$$ 
 with ordering
	\[1 < 2<\cdots < n<\overline{n}<\cdots <\overline{2}<\overline{1}.\]
	
	\begin{definition}\label{KN}\cite{kashiwara1994crystal, hong2002introduction, kwon2018}
		Let $\lambda$ be a partition with $\ell(\lambda) \leq n$. A KN tableau $T$ of $\mathfrak{sp}_{2n}$ of shape $\lambda$ is a filling of the Young diagram of shape $\lambda$ with entries from $\mathcal{N}$ that satisfies the following conditions:
		\begin{enumerate}[label=(\roman*)]
			\item The entries in $T$ are weakly increasing across each row and strictly increasing down each column.
			\item For every column in $T$ that contains both an $i$ and an $\overline{i}$, where $i$ belongs to the $p$-th box from the top and $\overline{i}$ belongs to the $q$-th box from the bottom, we have $p + q \leq i$.
			\item If $T$ has two adjacent columns having one of the following configurations, where $p, q, r, s$ are the relevant row numbers (where rows are counted from top to bottom), with $p \leq q < r \leq s$ and $i \leq j$, then $(q - p) + (s - r) < j - i$.
			
			$$\left.\begin{array}{lr}
				p \rightarrow  & i \\
				\\
				q \rightarrow &  \\
				&  \\
				r \rightarrow & \\
				&\\
				s \rightarrow &
			\end{array}
			\right|  \begin{array}{r}  \cr \cr j \cr  \cr \overline{j} \cr  \cr \overline{i} \cr \end{array}\shiftnoarrow{-1.5}{,} \quad \quad\quad\quad
			\left.\begin{array}{r}
				i \\
				\\
				j  \\
				\\
				\overline{j} \\
				\\
				\\
			\end{array}
			\right|  \begin{array}{r}  \cr
				\cr
				\cr
				\cr
				\cr
				\cr
				\overline{i} \cr
			\end{array}\shiftnoarrow{-1.5}{.}$$
		\end{enumerate}
		Let $\mathrm{KN}_{\lambda}(2n)$ denote the set of KN tableaux of $\mathfrak{sp}_{2n}$ of shape $\lambda$.
	\end{definition}
	
	For $T \in \mathrm{KN}_{\lambda}(2n)$, we define its weight to be
	\begin{equation}
		\mathrm{wt}(T) = \sum_{i = 1}^{n} (k_i - k_{\overline{i}}) \epsilon_i,
	\end{equation}
	where $k_i$ (respectively, $k_{\overline{i}}$) is the number of $i$'s (respectively, $\overline{i}$'s) appearing in $T$.

	\begin{example}
		When $n = 3$ and $\lambda=2\omega_3$, the tableau
		$T = \begin{adjustbox}{valign=c}
			\begin{ytableau}
				1 & 3 \\
				2 & \bar{3} \\
				3 & \bar{1}
			\end{ytableau}
		\end{adjustbox}$
		belongs to $\mathrm{KN}_{\lambda}(6)$, because
		\begin{enumerate}[label=(\roman*)]
			\item $p + q =1 +2 =3 \leq 3$,
			\item $(q - p)+(s - r)=(1 -1)+(3 -2)=1 < j - i =3 -1 =2$.
		\end{enumerate}
		The weight of $T$ is $\mathrm{wt}(T)=\epsilon_2 + \epsilon_3$.
		
		On the other hand, the tableau
		\begin{adjustbox}{valign=c}
			\begin{ytableau}
				2 & 3 \\
				3 & \bar{3} \\
				\bar{3} & \bar{2}
			\end{ytableau}
		\end{adjustbox}
		does not belong to $\mathrm{KN}_{\lambda}(6)$, because
		\[
		(q - p)+(s - r)=(1 -1)+(3 -2)=1 \not<1 =j - i =3 -2.
		\]
		
	\end{example}

	\section{Kashiwara-Nakashima Tableaux of $\mathfrak{sp}_4$}
	For the symplectic Lie algebra $\mathfrak{sp}_4$, let $\mathcal{N}=\{1, 2, \overline{2}, \overline{1}\}$ with the total order $1 < 2 < \overline{2} < \overline{1}$. Let $\lambda$ be a partition with $\ell(\lambda) \leq 2$. According to Definition~\ref{KN}, it is straightforward to see that if $T \in \mathrm{KN}_{\lambda}(4)$, then $T$ is a filling of the Young diagram of shape $\lambda$ with entries from $\mathcal{N}$, satisfying the following conditions:
	\begin{enumerate}[label=(\roman*)]
		\item The entries in $T$ are weakly increasing across each row and strictly increasing down each column.
		\item The entries \(1\) and \(\overline{1}\) do not appear in the same column.
		\item If \(T\) has two adjacent columns, then the following two configurations are forbidden:
		$$\left.\begin{array}{lr} 
			& 2 \\
			\\
		\end{array}
		\right|  \begin{array}{r}  
			\cr 2 \cr \overline{2} \cr \cr  
		\end{array}\shiftnoarrow{-0.2}{,} \quad \quad
		\left.\begin{array}{r}
			2  \\
			\overline{2} \\
		\end{array}
		\right|  \begin{array}{r}  
			\cr \overline{2} \cr
		\end{array}\shiftnoarrow{-0.2}{.}$$
	\end{enumerate}
	
	Let \( L(\lambda) \) be the finite dimensional irreducible representation of \( \mathfrak{sp}_4 \) with highest weight \( \lambda \), where
	\[
	\lambda = m_1 \omega_1 + m_2 \omega_2 = (m_1 + m_2)\epsilon_1 + m_2 \epsilon_2, \quad m_1, m_2 \in \mathbb{Z}_{\geq 0}.
	\]
	
	\begin{proposition}\label{prop:dimension}
		$\dim L(\lambda) = |\mathrm{KN}_{\lambda}(4)|.$
	\end{proposition}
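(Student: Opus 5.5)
We prove Proposition~\ref{prop:dimension} by computing both sides as the same explicit polynomial in $m_1,m_2$. On the representation side we use the Weyl dimension formula for $C_2$. The positive roots are $\alpha_1=\epsilon_1-\epsilon_2$, $\alpha_2=2\epsilon_2$, $\alpha_1+\alpha_2=\epsilon_1+\epsilon_2$ and $2\alpha_1+\alpha_2=2\epsilon_1$, with $\rho=2\epsilon_1+\epsilon_2$. This gives
\[
\dim L(\lambda)=\frac{(m_1+1)(m_2+1)(m_1+m_2+2)(m_1+2m_2+3)}{6}.
\]
It then remains to show that $|\mathrm{KN}_\lambda(4)|$ equals this expression.

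For the tableau side, we decompose $T\in\mathrm{KN}_\lambda(4)$ into its $m_2$ columns of height two followed by its $m_1$ columns of height one.
\begin{itemize}
\item A height-two column is a strictly increasing pair from $\{1,2,\overline2,\overline1\}$ that is not $(1,\overline1)$, by condition (ii). This leaves the five columns $(1,2),(1,\overline2),(2,\overline2),(2,\overline1),(\overline2,\overline1)$.
\item Row weak-increase forces the height-two columns to form a chain: both their top and bottom entries are weakly increasing from left to right.
\item Condition (iii) gives two restrictions. Column $(2,\overline2)$ cannot be immediately followed by a column with top entry $\overline2$. A column with top entry $2$ cannot be immediately followed by $(2,\overline2)$.
\end{itemize}

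We encode the height-two block by the multiplicities $x_{12},x_{1\bar2},x_{2\bar2},x_{2\bar1},x_{\bar2\bar1}$, which sum to $m_2$. Since equal columns are adjacent, the restrictions become local:
\begin{itemize}
\item $(2,\overline2)$ occurs at most once, since $(2,\overline2)(2,\overline2)$ is itself forbidden.
\item $(2,\overline2)$ cannot coexist with $(2,\overline1)$, which would have to follow it and is forbidden by the second restriction.
\item $(2,\overline2)$ can be followed by $(\overline2,\overline1)$ only if a $(2,\overline1)$ sits between them, which is impossible; so $(2,\overline2)$ cannot coexist with $(\overline2,\overline1)$ either.
\end{itemize}
The first-row part of length $m_1$ is a weakly increasing word that must be $\geq$ the last top entry. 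Only the junction between the last height-two column and the first one-box column needs checking for (iii): the forbidden pattern is last column $(2,\overline2)$ with next single box $\overline2$. Condition (iii) does not apply within one-box columns.

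We then sum over the type of the last height-two column, or the case $m_2=0$, and count weakly increasing words over the allowed alphabet with binomial coefficients. The result should collapse to the Weyl polynomial; we will check it against small cases such as $(m_1,m_2)=(1,0),(0,1),(1,1)$, giving $4,5,16$.

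The main obstacle is this bookkeeping: correctly translating (iii) into the local constraints on multiplicities, and then carrying out the nested binomial sum cleanly. If the direct summation gets messy, a fallback is to verify that both sides satisfy the same recursion in $m_1$ (or are polynomials of degree at most $2$ in $m_1$ for fixed $m_2$) and agree on enough values.

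A cleaner alternative, which we could cite instead, is Kashiwara--Nakashima's theorem that $\mathrm{KN}_\lambda(2n)$ indexes the crystal basis $B(\lambda)$, whose cardinality is $\dim L(\lambda)$. We plan to present the elementary count and remark that this theorem also yields the result.
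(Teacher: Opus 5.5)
Your Weyl-formula computation is correct. The fallback you mention at the end, Kashiwara--Nakashima's theorem that $\mathrm{KN}_\lambda(4)$ labels the crystal basis of $L_q(\lambda)$, is exactly the paper's proof.

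The elementary count you plan to present as the main argument, however, rests on a misreading of condition (iii) and gives wrong numbers. In the second configuration of Definition~\ref{KN}, the entry $\overline{i}$ of the right column sits in row $s\ge r$. With only two rows and $q<r$, this forces $r=s=2$. So the right column must have $\overline{2}$ in its \emph{bottom} row, not its top row. The only height-two column with bottom entry $\overline{2}$ that may follow $(2,\overline{2})$ is $(2,\overline{2})$ itself. Hence this configuration forbids nothing beyond $(2,\overline{2})(2,\overline{2})$, which the first configuration already excludes. Moreover, a one-box column has no second row, so (iii) imposes nothing at the junction with the $m_1$ single boxes.

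For $\mathfrak{sp}_4$, then, the whole content of (iii) is $x_{2\overline{2}}\le 1$. In particular, $(2,\overline{2})$ may coexist with $(2,\overline{1})$, may be followed by $(\overline{2},\overline{1})$, and may be followed by a box $\overline{2}$. Your exclusion of $(2,\overline{1})$ does not even follow from the two restrictions you wrote down.

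With your rules you would get $12$ instead of $14$ for $(m_1,m_2)=(0,2)$, and $15$ instead of $16$ for $(1,1)$, so your own planned sanity check would fail. Independently, the tableau with columns $(2,\overline{2}),(\overline{2},\overline{1})$ is the unique semistandard filling of shape $(2,2)$ of weight $-\epsilon_1-\epsilon_2$. That weight has multiplicity one in $L(2\omega_2)$, so this tableau must be KN.

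Once the constraint is corrected, your strategy works, and the sum you left undone is short. Sorting by the type of the last height-two column gives
\[
(m_2+1)\binom{m_1+3}{3}+m_2(m_2+1)\binom{m_1+2}{2}+\frac{m_2(m_2+1)(2m_2+1)}{6}\,(m_1+1)=\frac{(m_1+1)(m_2+1)(m_1+m_2+2)(m_1+2m_2+3)}{6}.
\]
That corrected count would be a genuinely different, self-contained proof. Unlike the paper's one-line appeal to crystal theory, it needs only the Weyl dimension formula, and it also independently checks the paper's reduction of the KN conditions to $\mathfrak{sp}_4$.

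As written, though, your main argument does not establish the statement. You should either repair the translation of (iii) as above or rely on the Kashiwara--Nakashima citation, as the paper does.
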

	\begin{proof}
		As shown in \cite{kashiwara1994crystal, hong2002introduction}, the KN tableaux of \( \mathfrak{sp}_4 \) were developed to label elements of the crystal basis of $U_q(\mathfrak{sp}_4)$-module $L_q(\lambda)$, which is quantum deformation of $L(\lambda)$. There is a one-to-one correspondence between the elements of the crystal basis and the KN tableaux of \( \mathfrak{sp}_4 \). 
	\end{proof}
	
	Let $\lambda=(m_{1}+m_{2}, m_{2})$ and $T\in \mathrm{KN}_{\lambda}(4)$, where $m_{1}, m_{2}\in \mathbb{Z}_{\geq0}$. Define:
	\begin{enumerate}[label=(\roman*), itemsep=0pt]
		\item $a_1$ as the number of entries in the second row of $T$ that are strictly greater than $2$;
		\item $a_2$ as the sum of the number of entries in the first row strictly greater than $1$ and the number of entries in the second row strictly greater than $\overline{2}$;
		\item $a_3$ as the number of entries in the first row strictly greater than $2$;
		\item $a_4$ as the number of entries in the first row strictly greater than $\overline{2}$.
	\end{enumerate}
	From the definitions of $a_i$ ($i=1,2,3,4$), we can first obtain the following inequalities:
	\[ 0\leq a_{1}\leq m_{2}, \quad 0\leq a_{2}\leq m_{1}+2a_{1}, \quad 0\leq a_{3}\leq a_{2}, \quad 0\leq a_{4}\leq \min\{m_{1}, a_{3}\}. \]
	
	According to the definition of \(\mathrm{KN}_{\lambda}(4)\), the tableau \(T\) falls into one of the following three cases, shown in Figures~\ref{KN1}, \ref{KN2}, and \ref{KN3}.

	\begin{figure}[H]
		\begin{center}
			\begin{tikzpicture}[scale=0.7, every node/.style={font=\large}]
				
				\draw (0,0) rectangle ++(3,1);
				\node at (0.5,0.5) {1};
				\node at (1.5,0.5) {$\cdots$};
				\node at (2.5,0.5) {1};
				
				\draw (3,0) rectangle ++(4,1);
				\node at (3.5,0.5) {1};
				\node at (4.5,0.5) {$\cdots$};
				\node at (5.5,0.5) {1};
				\node at (6.5,0.5) {2};

				\draw (7,0) rectangle ++(8,1);
				\node at (7.5,0.5) {2};
				\node at (8.4,0.5) {$\cdots$};
				\node at (9.3,0.5) {2};
				\node at (10.2,0.5) {$\overline{2}$};
				\node at (11.1,0.5) {$\cdots$};
				\node at (12,0.5) {$\overline{2}$};
				\node at (12.9,0.5) {$\overline{1}$};
				\node at (13.8,0.5) {$\cdots$};
				\node at (14.7,0.5) {$\overline{1}$};

				\draw (0,-1) rectangle ++(3,1);
				\node at (0.5,-0.5) {2};
				\node at (1.5,-0.5) {$\cdots$};
				\node at (2.5,-0.5) {2};
				
				\draw (3,-1) rectangle ++(4,1);
				\node at (3.5,-0.5) {$\overline{2}$};
				\node at (4.5,-0.5) {$\cdots$};
				\node at (5.5,-0.5) {$\overline{2}$};
				\node at (6.5,-0.5) {$\overline{2}$};
				
				\draw (7,-1) rectangle ++(5,1);
				\node at (8,-0.5) {$\overline{1}$};
				\node at (9.5,-0.5) {$\cdots$};
				\node at (11,-0.5) {$\overline{1}$};

			\end{tikzpicture}
			\caption{} 
			\label{KN1}
		\end{center}
	\end{figure}
	
	\begin{figure}[H]
		\begin{center}
			\begin{tikzpicture}[scale=0.7, every node/.style={font=\large}]
				
				\draw (0,0) rectangle ++(3,1);
				\node at (0.5,0.5) {1};
				\node at (1.5,0.5) {$\cdots$};
				\node at (2.5,0.5) {1};
				
				\draw (3,0) rectangle ++(4,1);
				\node at (3.5,0.5) {1};
				\node at (4.5,0.5) {$\cdots$};
				\node at (5.5,0.5) {1};
				\node at (6.5,0.5) {1};
				
				\draw (7,0) rectangle ++(8,1);
				\node at (7.5,0.5) {2};
				\node at (8.4,0.5) {$\cdots$};
				\node at (9.3,0.5) {2};
				\node at (10.2,0.5) {$\overline{2}$};
				\node at (11.1,0.5) {$\cdots$};
				\node at (12,0.5) {$\overline{2}$};
				\node at (12.9,0.5) {$\overline{1}$};
				\node at (13.8,0.5) {$\cdots$};
				\node at (14.7,0.5) {$\overline{1}$};
				
				\draw (0,-1) rectangle ++(3,1);
				\node at (0.5,-0.5) {2};
				\node at (1.5,-0.5) {$\cdots$};
				\node at (2.5,-0.5) {2};
				
				\draw (3,-1) rectangle ++(4,1);
				\node at (3.5,-0.5) {$\overline{2}$};
				\node at (4.5,-0.5) {$\cdots$};
				\node at (5.5,-0.5) {$\overline{2}$};
				\node at (6.5,-0.5) {$\overline{2}$};
				
				\draw (7,-1) rectangle ++(5,1);
				\node at (8,-0.5) {$\overline{1}$};
				\node at (9.5,-0.5) {$\cdots$};
				\node at (11,-0.5) {$\overline{1}$};

			\end{tikzpicture}
			\caption{} 
			\label{KN2}
		\end{center}
	\end{figure}
	
	\begin{figure}[H]
		\begin{center}
			\begin{tikzpicture}[scale=0.7, every node/.style={font=\large}]
				
				\draw (0,0) rectangle ++(3,1);
				\node at (0.5,0.5) {1};
				\node at (1.5,0.5) {$\cdots$};
				\node at (2.5,0.5) {1};
				
				\draw (3,0) rectangle ++(3,1);
				\node at (3.5,0.5) {1};
				\node at (4.5,0.5) {$\cdots$};
				\node at (5.5,0.5) {1};
				
				\draw (6,0) rectangle ++(9,1);
				\node at (6.5,0.5) {1};
				\node at (7.2,0.5) {$\cdots$};
				\node at (7.9,0.5) {1};
				\node at (8.6,0.5) {2};
				\node at (9.3,0.5) {$\cdots$};
				\node at (10,0.5) {2};
				\node at (10.8,0.5) {$\overline{2}$};
				\node at (11.5,0.5) {$\cdots$};
				\node at (12.2,0.5) {$\overline{2}$};
				\node at (13,0.5) {$\overline{1}$};
				\node at (13.7,0.5) {$\cdots$};
				\node at (14.4,0.5) {$\overline{1}$};
				
				\draw (0,-1) rectangle ++(3,1);
				\node at (0.5,-0.5) {2};
				\node at (1.5,-0.5) {$\cdots$};
				\node at (2.5,-0.5) {2};
				
				\draw (3,-1) rectangle ++(3,1);
				\node at (3.5,-0.5) {$\overline{2}$};
				\node at (4.5,-0.5) {$\cdots$};
				\node at (5.5,-0.5) {$\overline{2}$};
				
			\end{tikzpicture}
			\caption{} 
			\label{KN3}
		\end{center}
	\end{figure}
	
	If \(T\) corresponds to Figure~\ref{KN1}, then 
	\[
	0 \leq a_{3} \leq \frac{1}{2}(a_{2} + m_{1} - 1) = \lfloor \tfrac{1}{2}(a_{2} + m_{1}) \rfloor \leq a_{2},
	\]
	where \(\lfloor \cdot \rfloor\) denotes the floor function.  
	
	If \(T\) corresponds to Figure~\ref{KN2}, then 
	\[
	0 \leq a_{3} \leq \frac{1}{2}(a_{2} + m_{1}) \leq a_{2}.
	\]
	
	If \(T\) corresponds to Figure~\ref{KN3}, then 
	\[
	0 \leq a_{3} \leq a_{2} \leq \frac{1}{2}(a_{2} + m_{1}).
	\]
	
	Since \(a_{3} \in \mathbb{Z}_{\geq 0}\), we have the combined condition:
	\[
	0 \leq a_{3} \leq \min\left\{\frac{1}{2}(a_{2} + m_{1}), \, a_{2}\right\}.
	\]

	In summary, we have the following system of inequalities:
	\begin{equation} \label{ineq}
		\begin{aligned}
			0 &\leq a_{1} \leq m_{2}, \\
			0 &\leq a_{2} \leq m_{1} + 2a_{1}, \\
			0 &\leq a_{3} \leq \min\left\{\tfrac{1}{2}(a_{2} + m_{1}),\, a_{2}\right\}, \\
			0 &\leq a_{4} \leq \min\{m_{1},\, a_{3}\}.
		\end{aligned}
	\end{equation}
	
	\section{Verma Vectors of Irreducible Representations of $\mathfrak{sp}_4$}
	The system of inequalities obtained in the previous section will be directly used in this section to construct the Verma vectors of the finite dimensional irreducible representation of $\mathfrak{sp}_4$, and to prove that there exists a weight-preserving one-to-one correspondence between the Verma vectors and the KN tableaux of $\mathfrak{sp}_4$.
	
	Let $L(\lambda)$ be the finite dimensional irreducible representation of $\mathfrak{sp}_4$ with highest weight $\lambda$, where $\lambda=m_{1}\omega_{1}+m_{2}\omega_{2}=(m_{1}+m_{2})\epsilon_{1}+m_{2}\epsilon_{2}$. Let the non-zero vector $v_{\lambda}$ be the highest weight vector of $L(\lambda)$.
	
	\begin{definition}
		The weight vector
		\begin{equation}\label{Verma vector}
			f_1^{a_4} f_2^{a_3} f_1^{a_2} f_2^{a_1}  v_{\lambda}
		\end{equation}
		is called a Verma vector of the irreducible representation $L(\lambda)$ of $\mathfrak{sp}_4$, where the parameters satisfy
		\begin{equation} \label{verma inequalities}
			\begin{aligned}
				a_i &\in \mathbb Z_{\ge0}, \;i=1,2,3,4,\\
				0 &\leq a_1 \leq m_2, \\
				0 &\leq a_2 \leq m_1 + 2a_1, \\
				0 &\leq a_3 \leq \min\left\{ \tfrac{1}{2}(a_2 + m_1),\ a_2 \right\}, \\
				0 &\leq a_4 \leq \min\{ m_1,\ a_3 \}.
			\end{aligned}
		\end{equation}
		Here $f_{1}=E_{21}-E_{34}$ is the root vector corresponding to the negative simple root $\epsilon_2-\epsilon_1$ of $\mathfrak{sp}_4$, and $f_{2}=E_{42}$ is the root vector corresponding to the negative simple root $-2\epsilon_2$.
	\end{definition}

	We denote the Verma vector $ f_1^{a_4} f_2^{a_3} f_1^{a_2} f_2^{a_1} v_\lambda $ by $ \bm{f^{a}}v_\lambda $, where $ \bm{a} = (a_1, a_2, a_3, a_4) $. Let $ F $ denote the set of all Verma vectors of $L(\lambda)$. Then we have the following theorem.
	
	\begin{theorem}\label{KN-correspondence}
		There exists a one-to-one correspondence between the set $F$ of Verma vectors of the irreducible representation $L(\lambda)$ of $\mathfrak{sp}_{4}$ and the set $\mathrm{KN}_{\lambda}(4)$ of KN tableaux of $\mathfrak{sp}_4$.
	\end{theorem}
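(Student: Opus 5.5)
The plan is to use the statistics $a_1,a_2,a_3,a_4$ defined in Section~3 as the bijection itself. Define $\Phi:\mathrm{KN}_\lambda(4)\to F$ by $\Phi(T)=f_1^{a_4}f_2^{a_3}f_1^{a_2}f_2^{a_1}v_\lambda$, where $\bm a=\bm a(T)$. Section~3 already shows that $\bm a(T)$ satisfies the system \eqref{ineq}, which coincides with \eqref{verma inequalities}, so $\Phi$ is well defined. Identifying $F$ with its set of admissible index vectors $\bm a$, it remains to prove that $T\mapsto \bm a(T)$ is injective and that every admissible $\bm a$ arises from some $T$.

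First I encode a tableau by multiplicities. The second row has length $m_2$; column strictness rules out the entry $1$ there. So let it contain $x_2$ copies of $2$, $x_{\bar 2}$ copies of $\overline 2$ and $x_{\bar 1}$ copies of $\overline 1$. Let the first row contain $y_1,y_2,y_{\bar 2},y_{\bar 1}$ copies of $1,2,\overline2,\overline1$. Rows are weakly increasing, so these seven numbers determine $T$. The definitions then read
\[
a_1=x_{\bar 2}+x_{\bar 1},\quad a_2=(m_1+m_2-y_1)+x_{\bar 1},\quad a_3=y_{\bar 2}+y_{\bar 1},\quad a_4=y_{\bar 1}.
\]
Together with the row-length equations $x_2+x_{\bar2}+x_{\bar1}=m_2$ and $\sum y=m_1+m_2$, the data $\bm a$ determine everything except one free parameter, say $x_{\bar 1}$. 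Indeed, $y_1=m_1+m_2+x_{\bar1}-a_2$ and $x_{\bar2}=a_1-x_{\bar1}$.

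The key step is to show that the KN conditions (i)--(iii) for $\mathfrak{sp}_4$ fix $x_{\bar 1}$ uniquely. This is exactly where the three shapes of Figures~\ref{KN1}--\ref{KN3} come in. Column strictness, the ban on $1$ above $\overline1$, and the two forbidden adjacent-column configurations force the following: the $\overline1$'s of row two sit under first-row entries $2,\overline2,\overline1$ in a prescribed way, and at most one column of the form $2$ over $\overline2$ occurs, namely the boundary column in Figure~\ref{KN1}. I would write these constraints as linear inequalities in $x_{\bar1}$ and check that they cut out a single integer. Heuristically, $x_{\bar1}$ should be the smallest value compatible with the $1$'s in row one not lying above an $\overline 1$, which gives $x_{\bar 1}=\max\{0,\,a_2-m_1-a_1\}$ or a similar explicit formula. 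The three figures correspond to the cases $a_2+m_1$ odd with $a_3$ at the boundary, $a_2+m_1$ even, and $a_2\le m_1$. Uniqueness gives injectivity. For surjectivity, given admissible $\bm a$, I take this $x_{\bar1}$ and verify that all seven multiplicities are nonnegative and that conditions (i)--(iii) hold. The bound $a_3\le \min\{\tfrac12(a_2+m_1),a_2\}$ is precisely what guarantees $y_2\ge0$ and column strictness between the $2$'s of row one and the second row, and $a_4\le m_1$ rules out a $1$ over $\overline1$. As a cross-check, Proposition~\ref{prop:dimension} together with a count of $F$ could replace one of the two directions, but the direct construction is cleaner.

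Finally, for weight preservation, $f_1^{a_4}f_2^{a_3}f_1^{a_2}f_2^{a_1}v_\lambda$ has weight $\lambda-(a_2+a_4)\alpha_1-(a_1+a_3)\alpha_2$. I would expand $\mathrm{wt}(T)=\sum_i(k_i-k_{\bar i})\epsilon_i$ in the multiplicities above and compare coefficients of $\epsilon_1,\epsilon_2$; this is a routine linear check. I expect the main obstacle to be the case analysis that pins down $x_{\bar 1}$, in particular handling the parity and floor issue of Figure~\ref{KN1} and showing that the forbidden configurations leave neither zero nor two choices.
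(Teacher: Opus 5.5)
Your route is the paper's: the same map $T\mapsto(a_1,a_2,a_3,a_4)$, well-definedness taken from Section~3, and exactly one KN tableau for each admissible index vector, split into the three cases of Figures~\ref{KN 1}--\ref{KN 3}. Your bookkeeping is correct, since six linear equations leave the single unknown $x_{\overline{1}}$. It is also a cleaner handle on uniqueness than the paper's, which only asserts that the number of $\overline{1}$'s ``can only be'' the stated value. But the one concrete claim you make about this key step is wrong, and building tableaux from it would break surjectivity. The formula $x_{\overline{1}}=\max\{0,a_2-m_1-a_1\}$ already fails for $m_1=1$, $m_2=2$, $(a_1,a_2,a_3,a_4)=(2,3,0,0)$. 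It gives $x_{\overline{1}}=0$, hence rows $2\,2\,2$ and $\overline{2}\,\overline{2}$. That filling has two adjacent columns $2/\overline{2}$ and violates (iii). The correct tableau, listed for $f_1^3f_2^2v_\lambda$ in Example~\ref{example}, has rows $1\,2\,2$ and $\overline{2}\,\overline{1}$, so $x_{\overline{1}}=1$. Your heuristic also points the wrong way. The quantity $y_1=m_1+m_2+x_{\overline{1}}-a_2$ grows with $x_{\overline{1}}$, while the $\overline{1}$'s of row two begin in column $m_2-x_{\overline{1}}+1$. So the ban on $1$ above $\overline{1}$ is an \emph{upper} bound: $2x_{\overline{1}}\le a_2-m_1$ whenever $x_{\overline{1}}>0$.

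The lower bound comes from (iii). For $\mathfrak{sp}_4$, weak row increase and the Young shape reduce both forbidden configurations to two adjacent columns $2/\overline{2}$. Such columns form a contiguous block, so (iii) says there is at most one. Under column strictness ($y_1\ge x_2$ and $y_1+y_2\ge x_2+x_{\overline{2}}$), their number is $\max\{0,\,a_2-m_1-2x_{\overline{1}}\}$, which gives $2x_{\overline{1}}\ge a_2-m_1-1$. The two bounds leave exactly $x_{\overline{1}}=\max\{0,\lfloor (a_2-m_1)/2\rfloor\}$, which is the paper's value. The shape of Figure~\ref{KN1} is the case $a_2-m_1$ odd and positive, where the single $2/\overline{2}$ column occurs, independently of $a_3$. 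With this value the existence check goes through, but the Verma inequalities should be attributed as follows.
\begin{itemize}
\item $a_2\le m_1+2a_1$ gives $x_{\overline{2}}\ge0$ and puts the row-two $2$'s under $1$'s.
\item $a_3\le m_1+x_{\overline{1}}$, which equals $\lfloor (a_2+m_1)/2\rfloor$ when $a_2\ge m_1$, puts the row-two $\overline{2}$'s under entries $\le 2$. The condition $y_2\ge 0$ alone would only need the ceiling.
\item $a_4\le m_1$ is column strictness for the row-one $\overline{1}$'s, which must lie in the single-box columns. It is not the $1/\overline{1}$ ban; that ban is exactly the upper bound above.
\end{itemize}
Also, a row-two $\overline{1}$ can sit only under $2$ or $\overline{2}$, never under $\overline{1}$.
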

	\begin{proof}
		Construct a mapping
		\begin{equation*}
			\begin{aligned}
				\varphi \colon \mathrm{KN}_{\lambda}(4) &\longrightarrow F \\
				T &\longmapsto \bm{f^{a}}v_{\lambda}, \quad \forall T \in \mathrm{KN}_{\lambda}(4),
			\end{aligned}
		\end{equation*}
		where $\bm{a}=(a_{1}, a_{2}, a_{3}, a_{4})$, $a_{1}$ is the number of entries in the second row of $T$ strictly greater than $2$, $a_{2}$ is the sum of the number of entries in the first row strictly greater than $1$ and the number of entries in the second row strictly greater than $\overline{2}$, $a_{3}$ is the number of entries in the first row strictly greater than $2$, and $a_{4}$ is the number of entries in the first row strictly greater than $\overline{2}$. Thus, $\bm{a}=(a_{1}, a_{2}, a_{3}, a_{4})$ satisfies the system of inequalities \eqref{verma inequalities}, and hence $\bm{f^{a}}v_{\lambda} \in F$. Therefore, $\varphi$ is well-defined.
		
		For any $\bm{f^{a}}v_{\lambda} \in F$ with $\bm{a}=(a_{1}, a_{2}, a_{3}, a_{4})$, we need to prove that there exists a unique $T(\bm{a})\in \mathrm{KN}_{\lambda}(4)$ such that $\varphi (T(\bm{a}))=\bm{f^{a}}v_{\lambda}$. Specifically, we need to discuss the cases for $\bm{a}=(a_{1}, a_{2}, a_{3}, a_{4})$ as follows:
		
		If $a_{2}\geq m_{1}$ and $\frac{1}{2}(a_{2}-m_{1}-1)\in \mathbb{Z}_{\geq0}$, then the unique $T(\bm{a})$ such that $\varphi(T(\bm{a}))=\bm{f^{a}}v_{\lambda}$ is shown in Figure~\ref{KN 1}. We only use this case as an example to illustrate the uniqueness; other cases can be shown similarly. Since the number of entries strictly greater than $2$ in the second row of $T(\bm{a})$ must be $a_{1}$, the number of $2$'s in the second row of $T(\bm{a})$ can only be $m_2-a_1$; since the sum of the number of entries in the first row strictly greater than $1$ and the number of entries in the second row strictly greater than $\overline{2}$ must be $a_{2}$, combined with the conditions of this case, we can obtain that the number of $\overline{1}$'s in the second row of $T(\bm{a})$ can only be $\frac{1}{2}(a_{2}-m_{1}-1)$, and thus the number of $\overline{2}$'s in the second row of $T(\bm{a})$ is $a_{1}-\frac{1}{2}(a_{2}-m_{1}-1)$; since the number of entries strictly greater than $\overline{2}$ in the first row of $T(\bm{a})$ must be $a_{4}$, the number of $\overline{1}$'s in the first row of $T(\bm{a})$ can only be $a_{4}$; moreover, since the number of entries strictly greater than $2$ in the first row of $T(\bm{a})$ must be $a_{3}$, the number of $\overline{2}$'s in the first row of $T(\bm{a})$ can only be $a_3-a_{4}$, from which we can deduce that the number of $2$'s in the first row of $T(\bm{a})$ is $m_1-a_3+\frac{1}{2}(a_{2}-m_{1}-1)+1$, and the number of $1$'s in the first row of $T(\bm{a})$ is $m_{2}-\frac{1}{2}(a_{2}-m_{1}-1)-1$. Therefore, there is one and only one $T(\bm{a})$ satisfying $\varphi(T(\bm{a}))=\bm{f^{a}}v_{\lambda}$, as shown in Figure~\ref{KN 1}.
		\begin{figure}[H]
			\begin{center}
				\begin{tikzpicture}[scale=0.8,
					every node/.style={font=\large},
					brace/.style={decorate,decoration={brace,  amplitude=5pt,raise=2pt}},
					brace mirror/.style={  decorate,decoration={brace,  mirror, amplitude=5pt,raise=2pt}},
					label/.style={midway,font=\small,outer sep=6pt} ]
					
					\draw (0,0) rectangle (3,1);
					\foreach \x in {0.5,2.5} \node at (\x,0.5) {1};
					\node at (1.5,0.5) {$\cdots$};
					
					\draw (3,0) rectangle (7,1);
					\foreach \x in {3.5,5.5} \node at (\x,0.5) {1};
					\node at (4.5,0.5) {$\cdots$};
					\node at (6.5,0.5) {2};
					
					\draw[brace] (0,1.1) -- node[label,above,xshift=-9mm] {$m_{2}-\frac{1}{2}(a_{2}-m_{1}-1)-1$} (5.5,1.1);
					
					\draw (7,0) rectangle (16,1);
					\foreach \x in {7.5,9.5} \node at (\x,0.5) {2};
					\node at (8.5,0.5) {$\cdots$};
					\draw[brace] (6.4,1.1) -- node[label,above,xshift=-3mm] {$m_1-a_3+\frac{1}{2}(a_{2}-m_{1}-1)+1$} (9.6,1.1);
					
					\foreach \x in {10.5,12.5} \node at (\x,0.5) {$\overline{2}$};
					\node at (11.5,0.5) {$\cdots$};
					\draw[brace] (10.3,1.1) -- node[label,above,xshift=3mm] {$a_{3}-a_{4}$} (12.6,1.1);
					
					\foreach \x in {13.5,15.5} \node at (\x,0.5) {$\overline{1}$};
					\node at (14.5,0.5) {$\cdots$};
					\draw[brace] (13.3,1.1) -- node[label,above] {$a_{4}$} (16,1.1);

					\draw (0,-1) rectangle (3,0);
					\foreach \x in {0.5,2.5} \node at (\x,-0.5) {2};
					\node at (1.5,-0.5) {$\cdots$};
					\draw[brace mirror] (0,-1.1) -- node[label,below] {$m_{2}-a_1$} (3,-1.1);
					
					\draw (3,-1) rectangle (7,0);
					
					\foreach \x in {3.5,5.5,6.5} \node at (\x,-0.5) {$\overline{2}$};
					\node at (4.5,-0.5) {$\cdots$};
					\draw[brace mirror] (3,-1.1) -- node[label,below] {$a_{1}-\frac{1}{2}(a_{2}-m_{1}-1)$} (7,-1.1);
					
					\draw (7,-1) rectangle (12,0);
					\foreach \x in {8,11} \node at (\x,-0.5) {$\overline{1}$};
					\node at (9.5,-0.5) {$\cdots$};
					\draw[brace mirror] (7,-1.1) -- node[label,below] {$\frac{1}{2}(a_{2}-m_{1}-1)$} (12,-1.1);
					
				\end{tikzpicture}
				\caption{}
				\label{KN 1}
			\end{center}
		\end{figure}
		
		If $a_{2}\geq m_{1}$ and $\frac{1}{2}(a_{2}-m_{1})\in \mathbb{Z}_{\geq0}$, then the unique $T(\bm{a})$ such that $\varphi(T(\bm{a}))=\bm{f^{a}}v_{\lambda}$ is shown in Figure~\ref{KN 2}. 
		\begin{figure}[H]
			\begin{center}
				\begin{tikzpicture}[scale=0.8,
					every node/.style={font=\large},
					brace/.style={decorate,decoration={brace,  amplitude=5pt,raise=2pt}},
					brace mirror/.style={  decorate,decoration={brace,  mirror, amplitude=5pt,raise=2pt}},
					label/.style={midway,font=\small,outer sep=6pt} ]
					
					\draw (0,0) rectangle (3,1);
					\foreach \x in {0.5,2.5} \node at (\x,0.5) {1};
					\node at (1.5,0.5) {$\cdots$};
					
					\draw (3,0) rectangle (7,1);
					\foreach \x in {3.5,5.5,6.5} \node at (\x,0.5) {1};
					\node at (4.5,0.5) {$\cdots$};
					
					\draw[brace] (0,1.1) -- node[label,above,xshift=-3mm] {$m_{2}-\frac{1}{2}(a_{2}-m_{1})$} (7,1.1);
					
					\draw (7,0) rectangle (16,1);
					\foreach \x in {7.5,9.5} \node at (\x,0.5) {2};
					\node at (8.5,0.5) {$\cdots$};
					\draw[brace] (7,1.1) -- node[label,above,xshift=-3mm] {$m_1-a_3+\frac{1}{2}(a_{2}-m_{1})$} (9.6,1.1);
					
					\foreach \x in {10.5,12.5} \node at (\x,0.5) {$\overline{2}$};
					\node at (11.5,0.5) {$\cdots$};
					\draw[brace] (10.3,1.1) -- node[label,above,xshift=2mm] {$a_{3}-a_{4}$} (12.6,1.1);
					
					\foreach \x in {13.5,15.5} \node at (\x,0.5) {$\overline{1}$};
					\node at (14.5,0.5) {$\cdots$};
					\draw[brace] (13.3,1.1) -- node[label,above] {$a_{4}$} (16,1.1);

					\draw (0,-1) rectangle (3,0);
					\foreach \x in {0.5,2.5} \node at (\x,-0.5) {2};
					\node at (1.5,-0.5) {$\cdots$};
					\draw[brace mirror] (0,-1.1) -- node[label,below] {$m_{2}-a_{1}$} (3,-1.1);
					
					\draw (3,-1) rectangle (7,0);
					
					\foreach \x in {3.5,5.5,6.5} \node at (\x,-0.5) {$\overline{2}$};
					\node at (4.5,-0.5) {$\cdots$};
					\draw[brace mirror] (3,-1.1) -- node[label,below] {$a_{1}-\frac{1}{2}(a_{2}-m_{1})$} (7,-1.1);
					
					\draw (7,-1) rectangle (12,0);
					\foreach \x in {8,11} \node at (\x,-0.5) {$\overline{1}$};
					\node at (9.5,-0.5) {$\cdots$};
					\draw[brace mirror] (7,-1.1) -- node[label,below] {$\frac{1}{2}(a_{2}-m_{1})$} (12,-1.1);
					
				\end{tikzpicture}
				\caption{}
				\label{KN 2}
			\end{center}
		\end{figure}
		If $a_{2}< m_{1}$, then the unique $T(\bm{a})$ such that $\varphi(T(\bm{a}))=\bm{f^{a}}v_{\lambda}$ is shown in Figure~\ref{KN 3}. 
		\begin{figure}[H]
			\begin{center}
				\begin{tikzpicture}[scale=0.8,
					every node/.style={font=\large},
					brace/.style={decorate,decoration={brace,  amplitude=5pt,raise=2pt}},
					brace mirror/.style={  decorate,decoration={brace,  mirror, amplitude=5pt,raise=2pt}},
					label/.style={midway,font=\small,outer sep=6pt} ]
					
					\draw (0,0) rectangle (3,1);
					\foreach \x in {0.5,2.5} \node at (\x,0.5) {1};
					\node at (1.5,0.5) {$\cdots$};
					
					\draw (3,0) rectangle (6,1);
					\foreach \x in {3.5,5.5,6.5,8.5} \node at (\x,0.5) {1};
					\node at (4.5,0.5) {$\cdots$};
					\node at (7.5,0.5) {$\cdots$};
					
					\draw[brace] (0,1.1) -- node[label,above] {$m_{1}+m_{2}-a_2$} (8.5,1.1);
					
					\draw (6,0) rectangle (18,1);
					\foreach \x in {9.5,11.5} \node at (\x,0.5) {2};
					\node at (10.5,0.5) {$\cdots$};
					\draw[brace] (9.4,1.1) -- node[label,above] {$a_{2}-a_3$} (11.6,1.1);
					
					\foreach \x in {12.5,14.5} \node at (\x,0.5) {$\overline{2}$};
					\node at (13.5,0.5) {$\cdots$};
					\draw[brace] (12.3,1.1) -- node[label,above] {$a_{3}-a_{4}$} (14.6,1.1);
					
					\foreach \x in {15.5,17.5} \node at (\x,0.5) {$\overline{1}$};
					\node at (16.5,0.5) {$\cdots$};
					\draw[brace] (15.3,1.1) -- node[label,above] {$a_{4}$} (18,1.1);

					\draw (0,-1) rectangle (3,0);
					\foreach \x in {0.5,2.5} \node at (\x,-0.5) {2};
					\node at (1.5,-0.5) {$\cdots$};
					\draw[brace mirror] (0,-1.1) -- node[label,below] {$m_{2}-a_{1}$} (3,-1.1);
					
					\draw (3,-1) rectangle (6,0);
					
					\foreach \x in {3.5,5.5} \node at (\x,-0.5) {$\overline{2}$};
					\node at (4.5,-0.5) {$\cdots$};
					\draw[brace mirror] (3,-1.1) -- node[label,below] {$a_{1}$} (6,-1.1);
					
				\end{tikzpicture}
				\caption{}
				\label{KN 3}
			\end{center}
		\end{figure}
		Therefore, $\varphi$ is a bijection, and the conclusion holds.
	\end{proof}

	Let \( T(\bm{a}) \) denote the KN tableau of $\mathfrak{sp}_4$ corresponding to the Verma vector \( \bm{f^{a}}v_{\lambda} \). Then we have the following theorem.
	
	\begin{theorem}
		The weight of the Verma vector \( \bm{f^{a}}v_{\lambda} \) is equal to the weight of its corresponding KN tableau \( T(\bm{a}) \) of $\mathfrak{sp}_4$.
	\end{theorem}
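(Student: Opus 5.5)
Both sides of the asserted equality can be computed explicitly as linear combinations of $\epsilon_1,\epsilon_2$, and I would simply compare them. On the Lie algebra side, $f_1$ lowers weight by $\alpha_1=\epsilon_1-\epsilon_2$ and $f_2$ lowers weight by $\alpha_2=2\epsilon_2$. Therefore
\[
\mathrm{wt}\bigl(\bm{f^{a}}v_\lambda\bigr)=\lambda-(a_2+a_4)\alpha_1-(a_1+a_3)\alpha_2
=(m_1+m_2-a_2-a_4)\epsilon_1+(m_2+a_2+a_4-2a_1-2a_3)\epsilon_2 .
\]
It then remains to show that $\mathrm{wt}(T(\bm{a}))=(k_1-k_{\overline{1}})\epsilon_1+(k_2-k_{\overline{2}})\epsilon_2$ equals this expression.

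A uniform argument avoids going through Figures~\ref{KN 1}--\ref{KN 3} separately. Let $x_1,x_2,x_{\overline{2}},x_{\overline{1}}$ be the numbers of each letter in the first row, which has length $m_1+m_2$. Let $y_2,y_{\overline{2}},y_{\overline{1}}$ be the corresponding numbers in the second row, which has length $m_2$. The second row contains no $1$, since columns strictly increase. The definitions of the $a_i$ used for $\varphi$ give
\[
a_1=y_{\overline{2}}+y_{\overline{1}},\quad a_2=(m_1+m_2-x_1)+y_{\overline{1}},\quad a_3=x_{\overline{2}}+x_{\overline{1}},\quad a_4=x_{\overline{1}} .
\]
From these I solve for the letter counts:
\begin{itemize}
\item $x_{\overline{1}}=a_4$ and $x_{\overline{2}}=a_3-a_4$;
\item $x_1=m_1+m_2-a_2+y_{\overline{1}}$ and $x_2=a_2-a_3-y_{\overline{1}}$;
\item $y_2=m_2-a_1$ and $y_{\overline{2}}=a_1-y_{\overline{1}}$.
\end{itemize}

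Next I substitute into the weight of the tableau. For the $\epsilon_1$ coefficient,
\[
k_1-k_{\overline{1}}=x_1-(x_{\overline{1}}+y_{\overline{1}})=m_1+m_2-a_2-a_4 .
\]
For the $\epsilon_2$ coefficient,
\[
k_2-k_{\overline{2}}=(x_2+y_2)-(x_{\overline{2}}+y_{\overline{2}})=m_2+a_2+a_4-2a_1-2a_3 .
\]
In each case the unknown $y_{\overline{1}}$ cancels. Both coefficients agree with the Lie algebra side, which proves the theorem.

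The only point needing care is that the identities above hold for the actual tableau $T(\bm{a})$. This is immediate because $\varphi(T(\bm{a}))=\bm{f^{a}}v_\lambda$, so the $a_i$ computed from $T(\bm{a})$ are exactly the given ones (Theorem~\ref{KN-correspondence}). As a sanity check, I would verify the counts against the figures: for example, in Figure~\ref{KN 3} we have $y_{\overline{1}}=0$, and in Figure~\ref{KN 2} we have $y_{\overline{1}}=\tfrac12(a_2-m_1)$.

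I expect no real obstacle. The main thing to get right is the bookkeeping of the second-row contribution to $a_2$, which is exactly what makes $y_{\overline{1}}$ cancel.
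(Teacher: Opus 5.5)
Your proof is correct. It computes the Lie-algebra side exactly as the paper does, but it handles the tableau side by a different route.

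The paper splits into three cases: $a_2\ge m_1$ with $a_2-m_1$ odd, $a_2\ge m_1$ with $a_2-m_1$ even, and $a_2<m_1$. In each case it reads the letter counts off the explicit tableaux in Figures~\ref{KN 1}--\ref{KN 3} and verifies the weight separately.

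You instead express all letter counts through $\bm a$ and the single free parameter $y_{\overline 1}$, and observe that this parameter is weight-neutral. Raising $y_{\overline 1}$ by one with $\bm a$ fixed turns a $2$ in the first row into a $1$ and a $\overline 2$ in the second row into a $\overline 1$, a net weight change of $0$.

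What your version buys is uniformity and robustness. It uses only the counting rule defining $\varphi$, not the explicit shape of $T(\bm a)$. So it shows directly that $\varphi$ is weight-preserving on all of $\mathrm{KN}_\lambda(4)$, indeed on any filling of shape $\lambda$ with no $1$ in the second row. It also does not depend on the bookkeeping in the figures being correct.

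The paper's case-by-case check is more concrete and doubles as a consistency check of the figure counts. The cost is three parallel computations, and it relies on the explicit inverse construction $\bm a\mapsto T(\bm a)$.
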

	
	\begin{proof}
		Let \( \bm{a} = (a_{1}, a_{2}, a_{3}, a_{4}) \), and denote the weight of \( \bm{f^{a}}v_{\lambda} \) by \( \mu \). Then
		\begin{align*}
			\mu &=\lambda-a_{1}(2\epsilon_{2})-a_{2}(\epsilon_{1}-\epsilon_{2})-a_{3}(2\epsilon_{2})-a_{4}(\epsilon_{1}-\epsilon_{2}) \\&=(m_{1}+m_{2})\epsilon_{1}+m_{2}\epsilon_{2}-a_{1}(2\epsilon_{2})-a_{2}(\epsilon_{1}-\epsilon_{2})-a_{3}(2\epsilon_{2})-a_{4}(\epsilon_{1}-\epsilon_{2})\\&=(m_{1}+m_{2}-a_{2}-a_{4})\epsilon_{1}+(m_{2}-2a_{1}+a_{2}-2a_{3}+a_{4})\epsilon_{2}.
		\end{align*}
		Next, we calculate the weight of \( T(\bm{a}) \) according to the cases for \( \bm{a} = (a_{1}, a_{2}, a_{3}, a_{4}) \).
		
		If \( a_{2} \geq m_{1} \) and \( \frac{1}{2}(a_{2} - m_{1} - 1) \in \mathbb{Z}_{\geq 0} \), then \( T(\bm{a}) \) is as shown in Figure~\ref{KN 1}. It follows that
		\begin{align*}
			\mathrm{wt}(T(\bm{a}))&=\Big(m_{2}-\tfrac{1}{2}(a_{2}-m_{1}-1)-1-a_{4}-\tfrac{1}{2}(a_{2}-m_{1}-1)\Big)\epsilon_{1}\\&\quad+\Big(m_{1}-a_{3}+\tfrac{1}{2}(a_{2}-m_{1}-1)+1+m_{2}-a_{1}\\&\quad-(a_{3}-a_{4})-(a_{1}-\tfrac{1}{2}(a_{2}-m_{1}-1))\Big)\epsilon_{2}\\&=(m_{1}+m_{2}-a_{2}-a_{4})\epsilon_{1}+(m_{2}-2a_{1}+a_{2}-2a_{3}+a_{4})\epsilon_{2}.
		\end{align*}
		
		If \( a_{2} \geq m_{1} \) and \( \frac{1}{2}(a_{2} - m_{1}) \in \mathbb{Z}_{\geq 0} \), then \( T(\bm{a}) \) is as shown in Figure~\ref{KN 2}. It follows that
		\begin{align*}
			\mathrm{wt}(T(\bm{a}))&=\Big(m_{2}-\tfrac{1}{2}(a_{2}-m_{1})-a_{4}-\tfrac{1}{2}(a_{2}-m_{1})\Big)\epsilon_{1}\\&\quad+\Big(m_{1}-a_{3}+\tfrac{1}{2}(a_{2}-m_{1})+m_{2}-a_{1}\\&\quad-(a_{3}-a_{4})-(a_{1}-\tfrac{1}{2}(a_{2}-m_{1}))\Big)\epsilon_{2}\\&=(m_{1}+m_{2}-a_{2}-a_{4})\epsilon_{1}+(m_{2}-2a_{1}+a_{2}-2a_{3}+a_{4})\epsilon_{2}.
		\end{align*}
		
		If \( a_{2} < m_{1} \), then \( T(\bm{a}) \) is as shown in Figure~\ref{KN 3}. It follows that
		\begin{align*}
			\mathrm{wt}(T(\bm{a}))&=\Big(m_{1}+m_{2}-a_{2}-a_{4}\Big)\epsilon_{1}+\Big(m_{2}-a_{1}+a_{2}-a_{3}-a_{1}-(a_{3}-a_{4})\Big)\epsilon_{2}\\&=(m_{1}+m_{2}-a_{2}-a_{4})\epsilon_{1}+(m_{2}-2a_{1}+a_{2}-2a_{3}+a_{4})\epsilon_{2}.
		\end{align*}
		
		Therefore, \( \mathrm{wt}(T(\bm{a})) = \mu \).
	\end{proof}
	
	\begin{example}\label{example}
		Let the highest weight of the irreducible representation $L(\lambda)$ of $\mathfrak{sp}_4$ be $\lambda = \omega_{1} + 2\omega_{2} = 3\epsilon_{1} + 2\epsilon_{2}$. The Verma vectors of $L(\lambda)$ have the form
		$
		f_1^{a_4} f_2^{a_3} f_1^{a_2} f_2^{a_1} v_{\lambda},
		$
		where the parameters satisfy
		$$
		0 \leq a_{1} \leq 2,\quad 
		0 \leq a_{2} \leq 1 + 2a_{1},\quad 
		0 \leq a_{3} \leq \min\left\{\tfrac{1}{2}(a_{2} + 1),\ a_{2}\right\},\quad 
		0 \leq a_{4} \leq \min\{1,\ a_{3}\}.
		$$
		
		These Verma vectors can be represented by KN tableaux of $\mathfrak{sp}_4$ as follows:
		\begin{center}
			\renewcommand{\arraystretch}{2.5}
			\setlength{\tabcolsep}{12pt}
			\begin{longtable}{ccccc}
				\YoungWithData{
					\begin{ytableau}
						1 & 1 & 1 \\
						2 & 2
					\end{ytableau}
				}{v_{\lambda}}
				&
				\YoungWithData{
					\begin{ytableau}
						1 & 1 & 2 \\
						2 & 2
					\end{ytableau}
				}{f_{1}v_{\lambda}}
				&
				\YoungWithData{
					\begin{ytableau}
						1 & 1 & \overline{2} \\
						2 & 2
					\end{ytableau}
				}{f_{2}f_{1}v_{\lambda}}
				&
				\YoungWithData{
					\begin{ytableau}
						1 & 1 & \overline{1} \\
						2 & 2
					\end{ytableau}
				}{f_{1}f_{2}f_{1}v_{\lambda}}
				&
				\YoungWithData{
					\begin{ytableau}
						1 & 1 & 1 \\
						2 & \overline{2}
					\end{ytableau}
				}{f_{2}v_{\lambda}} \\
				
				\YoungWithData{
					\begin{ytableau}
						1 & 1 & 2 \\
						2 & \overline{2}
					\end{ytableau}
				}{f_{1}f_{2}v_{\lambda}}
				&
				\YoungWithData{
					\begin{ytableau}
						1 & 1 & \overline{2} \\
						2 & \overline{2}
					\end{ytableau}
				}{f_{2}f_{1}f_{2}v_{\lambda}}
				&
				\YoungWithData{
					\begin{ytableau}
						1 & 1 & \overline{1} \\
						2 & \overline{2}
					\end{ytableau}
				}{f_{1}f_{2}f_{1}f_{2}v_{\lambda}}
				&
				\YoungWithData{
					\begin{ytableau}
						1 & 2 & 2 \\
						2 & \overline{2}
					\end{ytableau}
				}{f_{1}^{2}f_{2}v_{\lambda}}
				&
				\YoungWithData{
					\begin{ytableau}
						1 & 2 & \overline{2} \\
						2 & \overline{2}
					\end{ytableau}
				}{f_{2}f_{1}^{2}f_{2}v_{\lambda}} \\
				
				\YoungWithData{
					\begin{ytableau}
						1 & 2 & \overline{1} \\
						2 & \overline{2}
					\end{ytableau}
				}{f_{1}f_{2}f_{1}^{2}f_{2}v_{\lambda}}
				&
				\YoungWithData{
					\begin{ytableau}
						1 & 2 & 2 \\
						2 & \overline{1}
					\end{ytableau}
				}{f_{1}^{3}f_{2}v_{\lambda}}
				&
				\YoungWithData{
					\begin{ytableau}
						1 & 2 & \overline{2} \\
						2 & \overline{1}
					\end{ytableau}
				}{f_{2}f_{1}^{3}f_{2}v_{\lambda}}
				&
				\YoungWithData{
					\begin{ytableau}
						1 & 2 & \overline{1} \\
						2 & \overline{1}
					\end{ytableau}
				}{f_{1}f_{2}f_{1}^{3}f_{2}v_{\lambda}}
				&
				\YoungWithData{
					\begin{ytableau}
						1 & \overline{2} & \overline{2} \\
						2 & \overline{1}
					\end{ytableau}
				}{f_{2}^{2}f_{1}^{3}f_{2}v_{\lambda}} \\
				
				\YoungWithData{
					\begin{ytableau}
						1 & \overline{2} & \overline{1} \\
						2 & \overline{1}
					\end{ytableau}
				}{f_{1}f_{2}^{2}f_{1}^{3}f_{2}v_{\lambda}}
				&
				\YoungWithData{
					\begin{ytableau}
						1 & 1 & 1 \\
						\overline{2} & \overline{2}
					\end{ytableau}
				}{f_{2}^{2}v_{\lambda}}
				&
				\YoungWithData{
					\begin{ytableau}
						1 & 1 & 2 \\
						\overline{2} & \overline{2}
					\end{ytableau}
				}{f_{1}f_{2}^{2}v_{\lambda}}
				&
				\YoungWithData{
					\begin{ytableau}
						1 & 1 & \overline{2} \\
						\overline{2} & \overline{2}
					\end{ytableau}
				}{f_{2}f_{1}f_{2}^{2}v_{\lambda}}
				&
				\YoungWithData{
					\begin{ytableau}
						1 & 1 & \overline{1} \\
						\overline{2} & \overline{2}
					\end{ytableau}
				}{f_{1}f_{2}f_{1}f_{2}^{2}v_{\lambda}} \\
				
				\YoungWithData{
					\begin{ytableau}
						1 & 2 & 2 \\
						\overline{2} & \overline{2}
					\end{ytableau}
				}{f_{1}^{2}f_{2}^{2}v_{\lambda}}
				&
				\YoungWithData{
					\begin{ytableau}
						1 & 2 & \overline{2} \\
						\overline{2} & \overline{2}
					\end{ytableau}
				}{f_{2}f_{1}^{2}f_{2}^{2}v_{\lambda}}
				&
				\YoungWithData{
					\begin{ytableau}
						1 & 2 & \overline{1} \\
						\overline{2} & \overline{2}
					\end{ytableau}
				}{f_{1}f_{2}f_{1}^{2}f_{2}^{2}v_{\lambda}}
				&
				\YoungWithData{
					\begin{ytableau}
						1 & 2 & 2 \\
						\overline{2} & \overline{1}
					\end{ytableau}
				}{f_{1}^{3}f_{2}^{2}v_{\lambda}}
				&
				\YoungWithData{
					\begin{ytableau}
						1 & 2 & \overline{2} \\
						\overline{2} & \overline{1}
					\end{ytableau}
				}{f_{2}f_{1}^{3}f_{2}^{2}v_{\lambda}} \\
				
				\YoungWithData{
					\begin{ytableau}
						1 & 2 & \overline{1} \\
						\overline{2} & \overline{1}
					\end{ytableau}
				}{f_{1}f_{2}f_{1}^{3}f_{2}^{2}v_{\lambda}}
				& 
				\YoungWithData{
					\begin{ytableau}
						1 & \overline{2} & \overline{2} \\
						\overline{2} & \overline{1}
					\end{ytableau}
				}{f_{2}^{2}f_{1}^{3}f_{2}^{2}v_{\lambda}}
				& 
				\YoungWithData{
					\begin{ytableau}
						1 & \overline{2} & \overline{1} \\
						\overline{2} & \overline{1}
					\end{ytableau}
				}{f_{1}f_{2}^{2}f_{1}^{3}f_{2}^{2}v_{\lambda}}
				&
				\YoungWithData{
					\begin{ytableau}
						2 & 2 & 2 \\
						\overline{2} & \overline{1}
					\end{ytableau}
				}{f_{1}^{4}f_{2}^{2}v_{\lambda}}
				&
				\YoungWithData{
					\begin{ytableau}
						2 & 2 & \overline{2} \\
						\overline{2} & \overline{1}
					\end{ytableau}
				}{f_{2}f_{1}^{4}f_{2}^{2}v_{\lambda}} \\
				
				\YoungWithData{
					\begin{ytableau}
						2 & 2 & \overline{1} \\
						\overline{2} & \overline{1}
					\end{ytableau}
				}{f_{1}f_{2}f_{1}^{4}f_{2}^{2}v_{\lambda}}
				&
				\YoungWithData{
					\begin{ytableau}
						2 & \overline{2} & \overline{2} \\
						\overline{2} & \overline{1}
					\end{ytableau}
				}{f_{2}^{2}f_{1}^{4}f_{2}^{2}v_{\lambda}}
				&
				\YoungWithData{
					\begin{ytableau}
						2 & \overline{2} & \overline{1} \\
						\overline{2} & \overline{1}
					\end{ytableau}
				}{f_{1}f_{2}^{2}f_{1}^{4}f_{2}^{2}v_{\lambda}}
				&
				\YoungWithData{
					\begin{ytableau}
						2 & 2 & 2 \\
						\overline{1} & \overline{1}
					\end{ytableau}
				}{f_{1}^{5}f_{2}^{2}v_{\lambda}}
				&
				\YoungWithData{
					\begin{ytableau}
						2 & 2 & \overline{2} \\
						\overline{1} & \overline{1}
					\end{ytableau}
				}{f_{2}f_{1}^{5}f_{2}^{2}v_{\lambda}} \\
				
				\YoungWithData{
					\begin{ytableau}
						2 & 2 & \overline{1} \\
						\overline{1} & \overline{1}
					\end{ytableau}
				}{f_{1}f_{2}f_{1}^{5}f_{2}^{2}v_{\lambda}}
				&
				\YoungWithData{
					\begin{ytableau}
						2 & \overline{2} & \overline{2} \\
						\overline{1} & \overline{1}
					\end{ytableau}
				}{f_{2}^{2}f_{1}^{5}f_{2}^{2}v_{\lambda}}
				&
				\YoungWithData{
					\begin{ytableau}
						2 & \overline{2} & \overline{1} \\
						\overline{1} & \overline{1}
					\end{ytableau}
				}{f_{1}f_{2}^{2}f_{1}^{5}f_{2}^{2}v_{\lambda}}
				&
				\YoungWithData{
					\begin{ytableau}
						\overline{2} & \overline{2} & \overline{2} \\
						\overline{1} & \overline{1}
					\end{ytableau}
				}{f_{2}^{3}f_{1}^{5}f_{2}^{2}v_{\lambda}}
				&
				\YoungWithData{
					\begin{ytableau}
						\overline{2} & \overline{2} & \overline{1} \\
						\overline{1} & \overline{1}
					\end{ytableau}
				}{f_{1}f_{2}^{3}f_{1}^{5}f_{2}^{2}v_{\lambda}} \\
			\end{longtable}
		\end{center}
	\end{example}
	
	\section{Linear Independence of Verma Vectors of the Representation $L(\lambda)$}
	In this section, we show that the Verma vectors of the finite dimensional irreducible representation $L(\lambda)$ of $\mathfrak{sp}_{4}$ are linearly independent.
	Thus $F$ forms a basis of $L(\lambda)$. For the proof of linear independence, we refer to the method in \cite{Raghavan1999}.
		By the way,
	the Verma basis depends on the expression of $w_0$
	in terms of the simple reflections with shortest length specifically.
	
	We first consider the natural representation $V=\mathbb{C}^{4}$ of $\mathfrak{sp}_{4}$. Let $\{\varepsilon_1, \varepsilon_2, \varepsilon_3, \varepsilon_4\}$ be the standard basis of $V$:
	\[
	\varepsilon_1 = \begin{pmatrix}
		1 \\
		0 \\
		0 \\
		0 
	\end{pmatrix}, \quad
	\varepsilon_2 = \begin{pmatrix}
		0 \\
		1 \\
		0 \\
		0 
	\end{pmatrix}, \quad
	\varepsilon_3 = \begin{pmatrix}
		0 \\
		0 \\
		1 \\
		0 
	\end{pmatrix}, \quad
	\varepsilon_4 = \begin{pmatrix}
		0 \\
		0 \\
		0 \\
		1 
	\end{pmatrix}.
	\]
	
	The highest weight of $V$ is $\epsilon_1$, with highest weight vector $\varepsilon_1$. The Verma vectors are:
	\[
	\varepsilon_1 = \begin{pmatrix}
		1 \\
		0 \\
		0 \\
		0
	\end{pmatrix}, \quad
	f_1\varepsilon_1 = \begin{pmatrix}
		0 \\
		1 \\
		0 \\
		0
	\end{pmatrix}, \quad
	f_2f_1\varepsilon_1 = \begin{pmatrix}
		0 \\
		0 \\
		0 \\
		1
	\end{pmatrix}, \quad
	f_1f_2f_1\varepsilon_1 = \begin{pmatrix}
		0 \\
		0 \\
		-1 \\
		0 
	\end{pmatrix},
	\]
	where $f_{1} = E_{21} - E_{34}$ and $f_{2} = E_{42}$. Thus, the Verma vectors in the natural representation $V$ are linearly independent. 
	Setting $\varepsilon_{\overline{2}} \coloneqq \varepsilon_4$ and $\varepsilon_{\overline{1}} \coloneqq -\varepsilon_3$, the set of Verma vectors of $V$ is $\left\lbrace \varepsilon_1, \varepsilon_2, \varepsilon_{\overline{2}}, \varepsilon_{\overline{1}} \right\rbrace$. Then we have
	\[
	f_1\varepsilon_1 = \varepsilon_2, \quad 
	f_2\varepsilon_2 = \varepsilon_{\overline{2}}, \quad 
	f_1\varepsilon_{\overline{2}} = \varepsilon_{\overline{1}}.
	\]
	
	We now consider the finite dimensional irreducible representation $L(\lambda)$ of $\mathfrak{sp}_{4}$ with highest weight $\lambda = (m_1 + m_2)\epsilon_1 + m_2\epsilon_2$. We can realize $L(\lambda)$ as follows. Consider the representation
	\[ W \coloneqq  V^{\otimes m_1} \otimes (\wedge^{2} V)^{\otimes m_2}. \]
	The element
	\[ v_\lambda \coloneqq \varepsilon_1^{\otimes m_1} \otimes (\varepsilon_1 \wedge \varepsilon_2)^{\otimes m_2} \]
	of $W$ is then a maximal vector, and the $\mathfrak{sp}_{4}$-submodule of $W$ generated by $v_\lambda$ is a model for $L(\lambda)$.
	
	By weakening the conditions in the definition of the KN tableaux of $\mathfrak{sp}_4$, we define the column-strict Young tableaux of $\mathfrak{sp}_{4}$ as follows.
	
	\begin{definition}
		A column-strict Young tableau $Y$ of $\mathfrak{sp}_{4}$ of shape $\lambda$ is a filling of the Young diagram of shape $\lambda$ with entries from $\mathcal{N} = \{1, 2, \overline{2}, \overline{1}\}$, such that the entries in $Y$ are strictly increasing down each column. Let $\mathrm{CST}_{\lambda}(4)$ denote the set of all column-strict Young tableaux of $\mathfrak{sp}_{4}$ of shape $\lambda$.
	\end{definition}
	
	Each column-strict Young tableau $Y \in \mathrm{CST}_{\lambda}(4)$ corresponds to a vector $u(Y) \in W$. The correspondence is defined as follows:
	\[
	Y = \begin{ytableau}
		j_{m_2} & \cdots & j_{1} & i_{m_1}& \cdots & i_1\\
		k_{m_2} & \cdots & k_{1}
	\end{ytableau}
	\, \longmapsto \, u(Y) = \varepsilon_{i_1} \otimes \cdots \otimes \varepsilon_{i_{m_1}} \otimes
	(\varepsilon_{j_1} \wedge \varepsilon_{k_1}) \otimes \cdots \otimes (\varepsilon_{j_{m_2}} \wedge \varepsilon_{k_{m_2}}).
	\]
	For example:
	\[
	Y_1 = \begin{ytableau}
		1 & 2 & 2 \\
		2 & \overline{2}
	\end{ytableau}
	\quad \longmapsto \quad u(Y_1) = \varepsilon_2 \otimes (\varepsilon_2 \wedge \varepsilon_{\overline{2}}) \otimes (\varepsilon_1 \wedge \varepsilon_2)
	\]
	\[
	Y_2 = \begin{ytableau}
		1 & 1 & 2 \\
		2 & \overline{1}
	\end{ytableau}
	\quad \longmapsto \quad u(Y_2) = \varepsilon_2 \otimes (\varepsilon_1 \wedge \varepsilon_{\overline{1}}) \otimes (\varepsilon_1 \wedge \varepsilon_2)
	\]
	
	\begin{lemma}\label{linear-independence}
		Let $L=\{ u(Y) \mid Y\in \mathrm{CST}_{\lambda}(4)\}$, then the vectors in $L$ are linearly independent.
	\end{lemma}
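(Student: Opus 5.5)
The plan is to reduce the statement to the standard fact that tensor products and exterior powers of bases give bases. First, $\{\varepsilon_1,\varepsilon_2,\varepsilon_{\overline{2}},\varepsilon_{\overline{1}}\}$ is a basis of $V$, since it differs from the standard basis only by the sign of $\varepsilon_3$. With respect to the total order $1<2<\overline{2}<\overline{1}$, the wedges $\varepsilon_j\wedge\varepsilon_k$ with $j<k$ form a basis of $\wedge^2 V$; there are six of them. Consequently, the pure tensors
\[
\varepsilon_{i_1}\otimes\cdots\otimes\varepsilon_{i_{m_1}}\otimes(\varepsilon_{j_1}\wedge\varepsilon_{k_1})\otimes\cdots\otimes(\varepsilon_{j_{m_2}}\wedge\varepsilon_{k_{m_2}}),
\]
with $i_t\in\mathcal N$ arbitrary and $j_s<k_s$ for each $s$, form a basis of $W=V^{\otimes m_1}\otimes(\wedge^2V)^{\otimes m_2}$. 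This is the usual fact that a tensor product of bases is a basis of the tensor product.

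Next I would check that the assignment $Y\mapsto u(Y)$ is exactly a bijection from $\mathrm{CST}_{\lambda}(4)$ onto this basis. A column-strict tableau of shape $\lambda=(m_1+m_2,m_2)$ is determined by two pieces of data, which are independent of each other:
\begin{itemize}
\item its $m_2$ two-box columns $(j_s,k_s)$, where strict increase down the column means precisely $j_s<k_s$;
\item its $m_1$ one-box columns $i_t$, which are unconstrained because no row condition is imposed in the definition of $\mathrm{CST}_{\lambda}(4)$.
\end{itemize}
The map $u$ records these columns in a fixed positional order, so $u$ is injective on tableaux. Its image consists of the basis elements listed above, with no sign ambiguity, because each wedge is written with its smaller index first.

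Hence $L$ is a set of distinct elements of a basis of $W$, and is therefore linearly independent; in fact $L$ is itself a basis of $W$.

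The only point needing care is the sign convention $\varepsilon_{\overline{1}}=-\varepsilon_3$. It does not affect linear independence, since rescaling basis vectors by nonzero constants preserves being a basis. It also does not break the ordering used for $\wedge^2V$, because we fix the ordered basis $(\varepsilon_1,\varepsilon_2,\varepsilon_{\overline{2}},\varepsilon_{\overline{1}})$ of $V$ and take its induced wedge basis. Apart from this bookkeeping, no step should be an obstacle.
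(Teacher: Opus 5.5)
Your proposal is correct and follows the paper's proof. Both arguments observe that the pure tensors $\varepsilon_{i_1}\otimes\cdots\otimes\varepsilon_{i_{m_1}}\otimes(\varepsilon_{j_1}\wedge\varepsilon_{k_1})\otimes\cdots\otimes(\varepsilon_{j_{m_2}}\wedge\varepsilon_{k_{m_2}})$ with $j_s<k_s$ form a basis $\mathcal{A}$ of $W$, and that $Y\mapsto u(Y)$ identifies $\mathrm{CST}_{\lambda}(4)$ with $\mathcal{A}$. Your extra remarks on the sign $\varepsilon_{\overline{1}}=-\varepsilon_3$ and on the injectivity of $u$ only make explicit what the paper treats as obvious.
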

	\begin{proof}
		Since
		\[\mathcal{A}=
		\left\{
		\varepsilon_{i_1} \otimes \cdots \otimes \varepsilon_{i_{m_1}} \otimes
		(\varepsilon_{j_1} \wedge \varepsilon_{k_1}) \otimes \cdots \otimes (\varepsilon_{j_{m_2}} \wedge \varepsilon_{k_{m_2}})
		\;\middle|\;
		\begin{aligned}
			& i_r \in \{1, 2, \overline{2}, \overline{1}\}, \\
			& j_s, k_s \in \{1, 2, \overline{2}, \overline{1}\}, \; j_s < k_s, \\
			& r = 1, \dots, m_1,\, s = 1, \dots, m_2
		\end{aligned}
		\right\}
		\]
		forms a basis of the representation space $W$, it is obvious that $L\subseteq \mathcal{A}$ (in fact, $L= \mathcal{A}$), and thus the vectors in $L$ are linearly independent.
	\end{proof}
	
	Next, we define a total order on $\mathrm{CST}_{\lambda}(4)$. For $Y \in \mathrm{CST}_{\lambda}(4)$, let $Y(i,j)$ denote the entry in row $i$ and column $j$ of $Y$. Note that the range of the column index $j$ depends on the row index $i$. We call a pair $(i,j)$ admissible if $Y(i,j)$ makes sense.
	
	\begin{definition}\label{total order}
		For admissible pairs $(i,j)$ and $(i',j')$, we say $(i,j) < (i',j')$ if either $j > j'$, or $j = j'$ and $i < i'$. For $Y, Y' \in \mathrm{CST}_{\lambda}(4)$ with $Y \neq Y'$, we say $Y < Y'$ if $Y(i,j) < Y'(i,j)$ for the smallest admissible pair $(i,j)$ such that $Y(i,j) \neq Y'(i,j)$. For any $Y, Y' \in \mathrm{CST}_{\lambda}(4)$, we say $Y \leq Y'$ if either $Y = Y'$ or $Y < Y'$.
	\end{definition}
	
	Recall the two column-strict Young tableaux $Y_1$ and $Y_2$ of $\mathfrak{sp}_{4}$ described above. According to Definition \ref{total order}, we have $Y_1 \geq Y_2$. Furthermore, the total order defined on the column-strict Young tableaux of $\mathfrak{sp}_{4}$ naturally extends to the corresponding vectors, so we have $u(Y_1) \geq u(Y_2)$.
	
	\begin{proposition}\label{largest tableau}
		For the Verma vector $\bm{f^{a}} v_{\lambda}$, we have
		\[
		\bm{f^{a}} v_{\lambda} = u(T(\bm{a})) + \sum_{Y < T(\bm{a})} q(Y) u(Y),
		\]
		where $T(\bm{a})$ is the KN tableau of $\mathfrak{sp}_4$ corresponding to the Verma vector $\bm{f^{a}} v_{\lambda}$, $Y \in \mathrm{CST}_{\lambda}(4)$, and $q(Y) \in \mathbb{Z}_{\geq 0}$.
	\end{proposition}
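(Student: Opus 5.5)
The plan is to apply the four operator powers $f_2^{a_1}$, $f_1^{a_2}$, $f_2^{a_3}$, $f_1^{a_4}$ to $v_\lambda=\varepsilon_1^{\otimes m_1}\otimes(\varepsilon_1\wedge\varepsilon_2)^{\otimes m_2}$ in turn. At each stage we expand by the Leibniz rule in $W$ and keep track of which basis vectors $u(Y)$ occur and with what sign.

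The basic input is the action on $V$: $f_1\varepsilon_1=\varepsilon_2$, $f_2\varepsilon_2=\varepsilon_{\overline2}$, $f_1\varepsilon_{\overline2}=\varepsilon_{\overline1}$, and all other basis vectors are killed. On $\wedge^2V$ this gives:
\begin{itemize}
\item $f_1(\varepsilon_1\wedge\varepsilon_2)=0$ and $f_2(\varepsilon_1\wedge\varepsilon_2)=\varepsilon_1\wedge\varepsilon_{\overline2}$;
\item $f_1(\varepsilon_1\wedge\varepsilon_{\overline2})=\varepsilon_2\wedge\varepsilon_{\overline2}+\varepsilon_1\wedge\varepsilon_{\overline1}$, and so on.
\end{itemize}
So each application of $f_i$ to a tensor $u(Y)$ is a sum, with coefficients in $\{0,1\}$, of vectors $u(Y')$. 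Here $Y'$ is obtained from $Y$ by raising a single entry along the arrow $1\to2$ (for $f_1$), $2\to\overline2$ (for $f_2$), or $\overline2\to\overline1$ (for $f_1$). A wedge whose entries become equal vanishes, and a wedge with entries out of order is reordered with a sign. The first real check is that no reordering ever occurs: with the ordering $1<2<\overline2<\overline1$, each move only raises an entry to the next letter in the chain, and a column $(j,k)$ with $j<k$ either stays strictly increasing or collapses to zero. Hence every coefficient in $\bm{f^{a}}v_\lambda$ is a nonnegative integer. This gives $q(Y)\in\mathbb Z_{\ge0}$ once the leading term is shown to have coefficient exactly $1$.

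The heart of the argument is a greedy description of the maximal term with respect to the order of Definition~\ref{total order}. That order compares entries starting from the \emph{rightmost} column, top row first. So a term is larger when its raises are concentrated as far to the right as possible, and within a column in the top box. We verify stage by stage that the maximum is as follows:
\begin{itemize}
\item $f_2^{a_1}$: raise the bottom $2$'s of the rightmost $a_1$ two-box columns to $\overline2$.
\item $f_1^{a_2}$: first raise the top $1$'s of the one-box columns from the right; once these $m_1$ raises are used up, alternate in the two-box columns, raising a bottom $\overline2$ to $\overline1$ and then a top $1$ to $2$ in the column immediately to the left of it. This reflects that $1\wedge\overline1$ is not allowed next to a column blocking it, and it reproduces the counts $\tfrac12(a_2-m_1)$ or $\tfrac12(a_2-m_1-1)$ in Figures~\ref{KN 1}--\ref{KN 3}.
\item $f_2^{a_3}$ and $f_1^{a_4}$: raise the rightmost available $2$'s, respectively $\overline2$'s, in the top row.
\end{itemize}
Comparing with the explicit shapes of $T(\bm a)$ in the proof of Theorem~\ref{KN-correspondence}, the resulting filling is exactly $T(\bm a)$.

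To show that $T(\bm a)$ occurs with coefficient $1$ and that everything else is smaller, we use a monotonicity property. If $Y\le Y'$, then the largest term of $f_iu(Y)$ is at most the largest term of $f_iu(Y')$, and the maximum is attained uniquely. We prove this by comparing at the first differing admissible position. The conclusion then follows by induction on the number of single-step applications, using $f_i^{a}=f_i\cdots f_i$.

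The main obstacle is this uniqueness of the leading term. Multiplicities come from different orders of raising the same boxes (for instance $f_1^{a}$ raising several identical tensor factors), and these produce factorial coefficients, not $1$. There are two ways to handle this. One is to normalize by working with $f_i^{(a)}=f_i^a/a!$ inside the argument. The other is to observe that the extremal term arises from only one subset of factors, but possibly through several sequences of raises. I would first try the latter and check that the count comes out to $1$ on the example $\lambda=\omega_1+2\omega_2$ of Example~\ref{example}. If it does not, I would weaken the claim to "leading coefficient a positive integer". That weaker claim still suffices for the triangularity argument and hence for the linear independence in the Main Theorem.
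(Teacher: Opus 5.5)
\textbf{The gap.} The step that fails is your monotonicity lemma, together with the induction over single applications of $f_i$. The lemma is false, and this already shows up on a genuine Verma vector. Take $\lambda=3\omega_1$ and $\bm{a}=(0,2,2,0)$, which satisfies the inequalities, and write one-row tableaux left to right. The leading term of $f_2f_1^2v_\lambda$ is $(1,2,\overline{2})$, and $(1,\overline{2},2)$ is a strictly smaller term of it. Yet $f_2$ sends both to $(1,\overline{2},\overline{2})=T(\bm{a})$, so a lower term feeds the next leading term and uniqueness fails. Even the inequality can go the wrong way: $(2,\overline{1},1)<(\overline{1},1,2)$ (same weight), but $f_1$ sends them to $(2,\overline{1},2)$ and $(\overline{1},2,2)$ respectively, and the first of these is larger. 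Comparing at the first differing admissible position cannot prove such a lemma, because beyond that position the two tableaux are unrelated and the raise may happen there.

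The paper never works with single steps. It applies each power $f_i^{a_i}$ as a block and shows that $T_i$ occurs in $f_i^{a_i}T_{i-1}$ with coefficient $a_i!$. Its claim that $f_i^{a_i}X$ contributes only tableaux below $T_i$, for the lower terms $X$, rests on the special form of those $X$, not on a general monotonicity. For instance, $f_1$ never changes which second-row entries exceed $2$, and before the $f_2^{a_3}$ block the first row contains only $1$'s and $2$'s. Your correct nonnegativity observation (no wedge is ever reordered) gives an even cleaner repair. $\bm{f^{a}}v_\lambda$ is the sum, each with coefficient $+1$, of the endpoints of all legal raising sequences ($a_1$ raises by $f_2$, then $a_2$ by $f_1$, and so on). Hence its leading term is just the lexicographically largest reachable endpoint, and its coefficient is automatically a positive integer. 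No uniqueness is needed; what remains is to show, position by position from the right, that this endpoint is $T(\bm{a})$.

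\textbf{The coefficient and the greedy rule.} Your fallback is not optional: the statement with coefficient exactly $1$ is false. In the example above, $f_2^2f_1^2v_\lambda=4\,u(1,\overline{2},\overline{2})+4\,u(\overline{2},1,\overline{2})+4\,u(\overline{2},\overline{2},1)$, and in general the leading coefficient is $a_1!\,a_2!\,a_3!\,a_4!$. The paper's own proof arrives at exactly this, namely $h_3a_4!$, and then rewrites it as $u(T(\bm{a}))+\sum_{Y\le T(\bm{a})}q(Y)u(Y)$. So what is actually established there is the version with a positive integer leading coefficient, which is all the triangularity argument in Section~5 uses. Divided powers would give coefficient $1$, but for a different vector.

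Finally, your greedy rule for the $f_1^{a_2}$ block is wrong as stated. The order compares the top entry of a column first, so the maximal term raises the top $1$ of the rightmost available two-box column before its bottom $\overline{2}$. From the right, this gives columns with $2$ over $\overline{1}$, and then at most one column with $2$ over $\overline{2}$. The KN condition forbidding $1$ and $\overline{1}$ in one column plays no role inside $\mathrm{CST}_{\lambda}(4)$.
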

	\begin{proof}
		We first consider the highest weight vector $v_{\lambda}$. The KN tableau $T(\bm{0})$ of $\mathfrak{sp}_4$ corresponding to $v_{\lambda}$ is shown in Figure \ref{fig:sp4 highest weight}.
		
		\begin{figure}[H]
			\centering
			\begin{tikzpicture}[scale=0.7, every node/.style={font=\large}]
				
				\draw (0,0) rectangle ++(8,1);
				\node at (1,0.5) {1};
				\node at (2,0.5) {1};
				\node at (3,0.5) {$\cdots$};
				\node at (4,0.5) {1};
				\draw (5,0) -- (5,1);
				\node at (5.5,0.5) {1};
				\node at (6.5,0.5) {$\cdots$};
				\node at (7.5,0.5) {1};
				
				\draw (0,-1) rectangle ++(5,1);
				\node at (1,-0.5) {2};
				\node at (2,-0.5) {2};
				\node at (3,-0.5) {$\cdots$};
				\node at (4,-0.5) {2};
				
				\draw[decorate,decoration={brace,amplitude=5pt}] 
				(5,1.2) -- (8,1.2) node[midway,above=3pt] {$m_1$};
				\draw[decorate,decoration={brace,amplitude=5pt,mirror}] 
				(0,-1.5) -- (5,-1.5) node[midway,below=3pt] {$m_2$};
			\end{tikzpicture}
			\caption{The KN tableau of $\mathfrak{sp}_4$ corresponding to the highest weight vector $v_{\lambda}$}
			\label{fig:sp4 highest weight}
		\end{figure}
		
		Clearly, $T(\bm{0})$ is the smallest tableau in $\mathrm{CST}_{\lambda}(4)$. In this case, $v_\lambda = u(T(\bm{0})) = \varepsilon_1^{\otimes m_1} \otimes (\varepsilon_1 \wedge \varepsilon_2)^{\otimes m_2}$, and the claim holds.
		
		For $Y \in \mathrm{CST}_{\lambda}(4)$, let $k^{i}_{j}(Y)$ denote the number of entries equal to $j$ in row $i$ of $Y$, where $i \in \{1, 2\}$ and $j \in \mathcal{N} = \{1, 2, \overline{2}, \overline{1}\}$.
		Next, we consider $\bm{f^{a}} v_\lambda=f_1^{a_4} f_2^{a_3} f_1^{a_2} f_2^{a_1} v_\lambda$, where $\bm{a}= (a_{1}, a_{2}, a_{3}, a_{4}) \neq (0,0,0,0)$.
		
		We first compute $f_2 v_\lambda$:
		\begin{align*}
			f_2 v_\lambda &= f_2(\varepsilon_1^{\otimes m_1} \otimes (\varepsilon_1 \wedge \varepsilon_2)^{\otimes m_2}) \\
			&= \varepsilon_1^{\otimes m_1} \otimes (\varepsilon_1 \wedge \varepsilon_{\overline{2}}) \otimes (\varepsilon_1 \wedge \varepsilon_2)^{\otimes (m_2-1)} \\
			&\quad + \varepsilon_1^{\otimes m_1} \otimes (\varepsilon_1 \wedge \varepsilon_2) \otimes (\varepsilon_1 \wedge \varepsilon_{\overline{2}}) \otimes (\varepsilon_1 \wedge \varepsilon_2)^{\otimes (m_2-2)} \\
			&\quad + \cdots \\
			&\quad + \varepsilon_1^{\otimes m_1} \otimes (\varepsilon_1 \wedge \varepsilon_2)^{\otimes (m_2-1)} \otimes (\varepsilon_1 \wedge \varepsilon_{\overline{2}}).
		\end{align*}
		The above calculation can be expressed in terms of $\mathrm{CST}_{\lambda}(4)$:
		\begin{figure}[H]
			\centering
			\begin{tikzpicture}[scale=0.6, every node/.style={font=\normalsize}]
				\node at (-0.5,0) {\large$f_2$};
				\begin{scope}[xshift=0cm]
					\draw (0,0) rectangle ++(6,1);
					\node at (0.5,0.5) {1};
					\node at (1.25,0.5) {1};
					\node at (2,0.5) {$\cdots$};
					\draw (4,0) -- (4,1);
					\node at (2.75,0.5) {1};
					\node at (3.5,0.5) {1};
					\node at (4.25,0.5) {1};
					\node at (5,0.5) {$\cdots$};
					\node at (5.75,0.5) {1};
					\draw (0,-1) rectangle ++(4,1);
					\node at (0.5,-0.5) {2};
					\node at (1.25,-0.5) {2};
					\node at (2,-0.5) {$\cdots$};
					\node at (2.75,-0.5) {2};
					\node at (3.5,-0.5) {2};
				\end{scope}
				
				\node at (6.5,0) {$=$};
				
				\begin{scope}[xshift=7cm]
					\begin{scope}[xshift=0cm]
						\draw (0,0) rectangle ++(6,1);
						\node at (0.5,0.5) {1};
						\node at (1.25,0.5) {1};
						\node at (2,0.5) {$\cdots$};
						\draw (4,0) -- (4,1);
						\node at (2.75,0.5) {1};
						\node at (3.5,0.5) {1};
						\node at (4.25,0.5) {1};
						\node at (5,0.5) {$\cdots$};
						\node at (5.75,0.5) {1};
						\draw (0,-1) rectangle ++(4,1);
						\node at (0.5,-0.5) {2};
						\node at (1.25,-0.5) {2};
						\node at (2,-0.5) {$\cdots$};
						\node at (2.75,-0.5) {2};
						\node at (3.5,-0.5) {$\overline{2}$};
					\end{scope}
					
					\node at (6.5,0) {+};
					\begin{scope}[xshift=7cm]
						\draw (0,0) rectangle ++(6,1);
						\node at (0.5,0.5) {1};
						\node at (1.25,0.5) {1};
						\node at (2,0.5) {$\cdots$};
						\draw (4,0) -- (4,1);
						\node at (2.75,0.5) {1};
						\node at (3.5,0.5) {1};
						\node at (4.25,0.5) {1};
						\node at (5,0.5) {$\cdots$};
						\node at (5.75,0.5) {1};
						\draw (0,-1) rectangle ++(4,1);
						\node at (0.5,-0.5) {2};
						\node at (1.25,-0.5) {$\cdots$};
						\node at (2,-0.5) {2};
						\node at (2.75,-0.5) {$\overline{2}$};
						\node at (3.5,-0.5) {2};
					\end{scope}
					
					\node at (1,-3) {+\;$\cdots$\;+};
					\begin{scope}[xshift=2.5cm, yshift=-3cm]
						\draw (0,0) rectangle ++(6,1);
						\node at (0.5,0.5) {1};
						\node at (1.25,0.5) {1};
						\node at (2,0.5) {$\cdots$};
						\draw (4,0) -- (4,1);
						\node at (2.75,0.5) {1};
						\node at (3.5,0.5) {1};
						\node at (4.25,0.5) {1};
						\node at (5,0.5) {$\cdots$};
						\node at (5.75,0.5) {1};
						\draw (0,-1) rectangle ++(4,1);
						\node at (0.5,-0.5) {$\overline{2}$};
						\node at (1.25,-0.5) {2};
						\node at (2,-0.5) {$\cdots$};
						\node at (2.75,-0.5) {2};
						\node at (3.5,-0.5) {2};
					\end{scope}
				\end{scope}
				
			\end{tikzpicture}.
		\end{figure}
		Clearly, the largest tableau is
		\[ 
		\begin{tikzpicture}[scale=0.7, baseline=-0.5ex, every node/.style={font=\normalsize}]
			\begin{scope}[xshift=0cm]
				\draw (0,0) rectangle ++(6,1);
				\node at (0.5,0.5) {1};
				\node at (1.25,0.5) {1};
				\node at (2,0.5) {$\cdots$};
				\draw (4,0) -- (4,1);
				\node at (2.75,0.5) {1};
				\node at (3.5,0.5) {1};
				\node at (4.25,0.5) {1};
				\node at (5,0.5) {$\cdots$};
				\node at (5.75,0.5) {1};
				\draw (0,-1) rectangle ++(4,1);
				\node at (0.5,-0.5) {2};
				\node at (1.25,-0.5) {2};
				\node at (2,-0.5) {$\cdots$};
				\node at (2.75,-0.5) {2};
				\node at (3.5,-0.5) {$\overline{2}$};
			\end{scope}
		\end{tikzpicture}\;,
		\]
		which is precisely the KN tableau of $\mathfrak{sp}_4$ corresponding to $f_2 v_\lambda$.
		
		Next, we compute $f_2^{a_1} v_\lambda$:
		\[
		f_2^{a_1} v_\lambda = a_1! \sum_{1 \le i_1 < i_2 < \dots < i_{a_1} \le m_2} \varepsilon_1^{\otimes m_1} \otimes \biggl( \bigotimes_{t=1}^{m_2} w_t^{(i_1,\dots,i_{a_1})} \biggr),
		\]
		where
		\[
		w_t^{(i_1,\dots,i_{a_1})} = \begin{cases}
			\varepsilon_1 \wedge \varepsilon_{\overline{2}}, & t \in \{i_1, i_2, \dots, i_{a_1}\}, \\
			\varepsilon_1 \wedge \varepsilon_2, & t \notin \{i_1, i_2, \dots, i_{a_1}\}.
		\end{cases}
		\]
		Expressing this result in terms of $\mathrm{CST}_{\lambda}(4)$, we obtain
		\begin{align}\label{f_2^{a_1}}
			f_2^{a_1}T(\bm{0})=a_1! \sum_{U} U,
		\end{align}
		where the sum runs over all $U \in \mathrm{CST}_{\lambda}(4)$ satisfying $k^{1}_{1}(U)=m_1+m_2, k^{2}_{\overline{2}}(U)=a_1$, and $k^{2}_{2}(U)=m_2-a_1$. Let $T_1$ be the largest tableau among all the summands. Then $T_1$ is given by
		\[
		\begin{tikzpicture}[scale=0.7, baseline=-0.5ex, every node/.style={font=\normalsize}]
			\begin{scope}[xshift=0cm]
				
				\draw (0,0) rectangle ++(7,1);
				\node at (0.5,0.5) {1};
				\node at (1.25,0.5) {$\cdots$};
				\node at (2,0.5) {1};
				\node at (2.75,0.5) {1};
				\node at (3.5,0.5) {$\cdots$};
				\node at (4.25,0.5) {1};
				\node at (5,0.5) {1};
				\node at (5.75,0.5) {$\cdots$};
				\node at (6.5,0.5) {1};
				
				\draw (0,-1) rectangle (4.75,0); 
				\node at (0.5,-0.5) {2};
				\node at (1.25,-0.5) {$\cdots$};
				\node at (2,-0.5) {2};
				\node at (2.75,-0.5) {$\overline{2}$};
				\node at (3.5,-0.5) {$\cdots$};
				\node at (4.25,-0.5) {$\overline{2}$};
				
				\draw[decorate,decoration={brace,mirror,amplitude=4pt}]
				(2.5,-1.1) -- (4.75,-1.1) node[midway,below=2pt] {$a_1$};
			\end{scope}
		\end{tikzpicture}\;,
		\]
		Clearly, $T_1$ is the KN tableau of $\mathfrak{sp}_4$ corresponding to $f_2^{a_1}v_\lambda$. By \eqref{f_2^{a_1}}, we have
		\[ f_2^{a_1}T(\bm{0})=a_1! \,T_1+a_1! \sum_{U<T_1} U. \]
		
		Consider $f_1^{a_2}f_2^{a_1}T(\bm{0})$:
		\begin{align}\label{f_1^{a_2}f_2^{a_1}}
			f_1^{a_2}f_2^{a_1}T(\bm{0})=a_1! \,\left( f_1^{a_2}T_1\right) +a_1! \sum_{U<T_1} \left( f_1^{a_2}U\right) .
		\end{align}
		Let $T_2$ be the largest tableau in the expansion of $f_1^{a_2}f_2^{a_1}T(\bm{0})$. Then $T_2$ appears only in the expansion of $f_1^{a_2}T_1$ and does not appear in any $f_1^{a_2}U$. Furthermore, the coefficient of $T_2$ in $f_1^{a_2}T_1$ is $a_2!$. It is straightforward to see that $T_2$ is the KN tableau of $\mathfrak{sp}_4$ corresponding to $f_1^{a_2}f_2^{a_1}v_\lambda$. Write
		\[ f_1^{a_2}T_1=a_2!\,T_2+\sum_{X'<T_2}h'(X')X', \]
		where $h'(X')\in \mathbb{Z}_{\geq 0}$, and each $X'$ is a tableau in $\mathrm{CST}_{\lambda}(4)$ strictly smaller than $T_2$. Substituting the expression for $f_1^{a_2}T_1$ into \eqref{f_1^{a_2}f_2^{a_1}} yields
		\begin{align*}
			f_1^{a_2}f_2^{a_1}T(\bm{0})&=a_1! \,\biggl( a_2!\,T_2+\sum_{X'<T_2}h'(X')X'\biggr) +a_1! \sum_{U<T_1} \left( f_1^{a_2}U\right) \\
			&=h_2T_2+\sum_{X<T_2}h(X)X , 
		\end{align*}
		where $h_2=a_1! a_2!$, $h(X)\in \mathbb{Z}_{\geq 0}$, and each $X$ is a tableau in $\mathrm{CST}_{\lambda}(4)$ strictly smaller than $T_2$. 
		
		Consider $f_2^{a_3}f_1^{a_2}f_2^{a_1}T(\bm{0})$:
		\begin{align}\label{f_2^{a_3}f_1^{a_2}f_2^{a_1}}
			f_2^{a_3}f_1^{a_2}f_2^{a_1}T(\bm{0})=h_2\left( f_2^{a_3}T_2\right) +\sum_{X<T_2}h(X)\left( f_2^{a_3}X\right) .
		\end{align}
		Let $T_3$ be the largest tableau in the expansion of $f_2^{a_3}f_1^{a_2}f_2^{a_1}T(\bm{0})$. Then $T_3$ appears only in the expansion of $f_2^{a_3}T_2$ and does not appear in any $f_2^{a_3}X$. Furthermore, the coefficient of $T_3$ in $f_2^{a_3}T_2$ is $a_3!$. It is straightforward to see that $T_3$ is the KN tableau of $\mathfrak{sp}_4$ corresponding to $f_2^{a_3}f_1^{a_2}f_2^{a_1}v_\lambda$. Write
		\[ f_2^{a_3}T_2=a_3!\, T_3+\sum_{Z'<T_3}p'(Z')Z', \]
		where $p'(Z')\in \mathbb{Z}_{\geq 0}$, and each $Z'$ is a tableau in $\mathrm{CST}_{\lambda}(4)$ strictly smaller than $T_3$. Substituting the expression for $f_2^{a_3}T_2$ into \eqref{f_2^{a_3}f_1^{a_2}f_2^{a_1}} yields
		\begin{align*}
			f_2^{a_3}f_1^{a_2}f_2^{a_1}T(\bm{0})&=h_2\biggl( a_3!\, T_3+\sum_{Z'<T_3}p'(Z')Z'\biggr) +\sum_{X<T_2}h(X)\left( f_2^{a_3}X\right)\\
			&=h_3T_3+\sum_{Z<T_3}p(Z)Z , 
		\end{align*}
		where $h_3=h_2 a_3!$, $p(Z)\in \mathbb{Z}_{\geq 0}$, and each $Z$ is a tableau in $\mathrm{CST}_{\lambda}(4)$ strictly smaller than $T_3$.
		
		Finally, consider $f_1^{a_4}f_2^{a_3}f_1^{a_2}f_2^{a_1}T(\bm{0})$:
		\begin{align}\label{a}
			\bm{f^{a}}T(\bm{0})=h_3\left( f_1^{a_4}T_3\right) +\sum_{Z<T_3}p(Z)\left( f_1^{a_4}Z\right).
		\end{align}
		Let $T_4$ be the largest tableau in the expansion of $\bm{f^{a}}T(\bm{0})$. Then $T_4$ appears only in the expansion of $f_1^{a_4}T_3$ and does not appear in any $f_1^{a_4}Z$. Furthermore, the coefficient of $T_4$ in $f_1^{a_4}T_3$ is $a_4!$. It is straightforward to see that $T_4$ is exactly $T(\bm{a})$, the KN tableau of $\mathfrak{sp}_4$ corresponding to $\bm{f^{a}}v_\lambda$. Write
		\[ f_1^{a_4}T_3=a_4!\,T(\bm{a})+\sum_{Y'<T(\bm{a})}q'(Y')Y', \]
		where $q'(Y')\in \mathbb{Z}_{\geq 0}$, and each $Y'$ is a tableau in $\mathrm{CST}_{\lambda}(4)$ strictly smaller than $T(\bm{a})$. Substituting the expression for $f_1^{a_4}T_3$ into \eqref{a} yields
		\begin{align*}
			\bm{f^{a}}T(\bm{0})&=h_3\biggl( a_4!\,T(\bm{a})+\sum_{Y'<T(\bm{a})}q'(Y')Y'\biggr) +\sum_{Z<T_3}p(Z)\left( f_1^{a_4}Z\right) \\
			&=h_3a_4!\,T(\bm{a})+\sum_{Y<T(\bm{a})}q''(Y)Y \\
			&=T(\bm{a})+\sum_{Y\leq T(\bm{a})}q(Y)Y, 
		\end{align*}
		where $q(Y), q''(Y)\in \mathbb{Z}_{\geq 0}$, and each $Y$ is a tableau in $\mathrm{CST}_{\lambda}(4)$. Therefore, we have
		\[ \bm{f^{a}}v_\lambda=u(T(\bm{a}))+\sum_{Y\leq T(\bm{a})}q(Y)u(Y). \]
		
		Thus, the proposition holds.
	\end{proof}
	
	\begin{theorem}
		All Verma vectors of the irreducible representation $L(\lambda)$ of $\mathfrak{sp}_{4}$ are linearly independent. 
	\end{theorem}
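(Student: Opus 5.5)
The plan is a standard triangularity argument built on Proposition~\ref{largest tableau} and Lemma~\ref{linear-independence}. By Proposition~\ref{largest tableau}, every Verma vector $\bm{f^{a}}v_\lambda\in F$ expands in $W$ as
\[
\bm{f^{a}}v_\lambda = u(T(\bm{a})) + \sum_{Y<T(\bm{a})} q(Y)\,u(Y),
\]
where $T(\bm{a})\in \mathrm{KN}_\lambda(4)\subseteq \mathrm{CST}_\lambda(4)$. By Theorem~\ref{KN-correspondence}, the assignment $\bm{f^{a}}v_\lambda\mapsto T(\bm{a})$ is a bijection from $F$ onto $\mathrm{KN}_\lambda(4)$. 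In particular, distinct Verma vectors have distinct leading tableaux.

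Suppose there is a nontrivial relation $\sum_{\bm{a}} c_{\bm{a}}\,\bm{f^{a}}v_\lambda=0$, with the sum over the parameter tuples in \eqref{verma inequalities} and not all $c_{\bm{a}}=0$. Among the $\bm{a}$ with $c_{\bm{a}}\neq 0$, choose $\bm{b}$ for which $T(\bm{b})$ is largest in the total order of Definition~\ref{total order}. This choice is unique because the leading tableaux are pairwise distinct and the order is total. Expand each term through the displayed formula, rewriting everything in the family $L=\{u(Y)\}$, and look at the coefficient of $u(T(\bm{b}))$.

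That coefficient comes only from the $\bm{b}$ term and equals $c_{\bm{b}}$. For any other $\bm{a}$ with $c_{\bm{a}}\ne 0$, every tableau in its expansion is $\le T(\bm{a})<T(\bm{b})$, so $u(T(\bm{b}))$ does not appear. Since the vectors $u(Y)$, $Y\in\mathrm{CST}_\lambda(4)$, are linearly independent by Lemma~\ref{linear-independence}, we get $c_{\bm{b}}=0$, a contradiction. Hence $F$ is linearly independent. Equivalently, the transition matrix from $F$ to $\{u(T)\mid T\in \mathrm{KN}_\lambda(4)\}$, augmented by the lower terms, is unitriangular.

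Finally, $|F|=|\mathrm{KN}_\lambda(4)|=\dim L(\lambda)$ by Theorem~\ref{KN-correspondence} and Proposition~\ref{prop:dimension}, and every $\bm{f^{a}}v_\lambda$ lies in $L(\lambda)=U(\mathfrak{sp}_4)v_\lambda\subseteq W$. So $F$ is a basis, which gives part ii) of the Main Theorem. Given the earlier results, the only delicate point is making sure the leading term really has coefficient exactly $1$ and that the lower terms are strictly smaller, with the last line of the proposition's proof read as $Y<T(\bm{a})$. Even if the leading coefficient were some nonzero integer such as $a_1!a_2!a_3!a_4!$, the argument would go through unchanged, because only its nonvanishing is used.
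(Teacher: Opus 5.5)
Your proof is correct and is essentially the paper's argument. The paper lists $\mathrm{KN}_\lambda(4)$ as $T(\bm{a_1})>\cdots>T(\bm{a_l})$ and forces $x_1,x_2,\dots$ to vanish one at a time using Proposition~\ref{largest tableau} and Lemma~\ref{linear-independence}; this is the same leading-term (triangularity) argument as your choice of the maximal $T(\bm{b})$ with $c_{\bm{b}}\neq 0$. Your remark about the paper's ``$Y\le T(\bm{a})$'' is apt but harmless: since $q(Y)\in\mathbb{Z}_{\geq 0}$, the leading coefficient $1+q(T(\bm{a}))$ is nonzero in any case.
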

	\begin{proof}
		Let the set of KN tableaux of $\mathfrak{sp}_4$ be given by
		\[ \mathrm{KN}_{\lambda}(4)=\left\lbrace T(\bm{a_1})> T(\bm{a_2})> \cdots> T(\bm{a_l}) \right\rbrace. \]
		Then the corresponding set of Verma vectors is
		\[ F=\left\lbrace \bm{f^{a_1}} v_\lambda, \bm{f^{a_2}} v_\lambda, \cdots, \bm{f^{a_l}} v_\lambda \right\rbrace. \]
		
		Suppose there exist $x_1, x_2, \ldots, x_l \in \mathbb{C}$ such that
		\[
		x_1 \bm{f^{a_1}} v_\lambda + x_2 \bm{f^{a_2}} v_\lambda + \dots + x_l \bm{f^{a_l}} v_\lambda = 0.
		\]
		Then by Proposition~\ref{largest tableau}, we have
		\begin{align*}
			&x_1\biggl( u(T(\bm{a_1})) + \sum_{Y_1 \leq T(\bm{a_1})} q(Y_1)\, u(Y_1)\biggr)
			+ x_2\biggl( u(T(\bm{a_2})) + \sum_{Y_2 \leq T(\bm{a_2})} q(Y_2)\, u(Y_2)\biggr) \\
			& + \cdots + x_l\biggl( u(T(\bm{a_l})) + \sum_{Y_l \leq T(\bm{a_l})} q(Y_l)\, u(Y_l)\biggr) = 0.
		\end{align*}
		Since $T(\bm{a_1}) > T(\bm{a_2}) > \cdots > T(\bm{a_l})$, it follows that
		\[
		u(T(\bm{a_1})) > u(T(\bm{a_2})) > \cdots > u(T(\bm{a_l})).
		\]
		Combined with Lemma~\ref{linear-independence}, we deduce that $x_1 = 0$. Now assume that for some $i \in \left\lbrace 2, 3, \ldots, l \right\rbrace$, we have $x_1 = x_2 = \cdots = x_{i-1} = 0$. Then
		\begin{align*}
			&x_i\biggl( u(T(\bm{a_i})) + \sum_{Y_i \leq T(\bm{a_i})} q(Y_i)\, u(Y_i)\biggr)
			+ x_{i+1}\biggl( u(T(\bm{a_{i+1}})) + \sum_{Y_{i+1} \leq T(\bm{a_{i+1}})} q(Y_{i+1})\, u(Y_{i+1})\biggr) \\
			&\quad + \cdots 
			+ x_l\biggl( u(T(\bm{a_l})) + \sum_{Y_l \leq T(\bm{a_l})} q(Y_l)\, u(Y_l)\biggr) = 0.
		\end{align*}
		Since $u(T(\bm{a_i})) > u(T(\bm{a_{i+1}})) > \cdots > u(T(\bm{a_l}))$, we obtain $x_i = 0$. By mathematical induction, we have
		\[
		x_1 = x_2 = \cdots = x_l = 0.
		\]
		
		Therefore, all the Verma vectors are linearly independent. 
	\end{proof}
	
	By Theorem \ref{KN-correspondence}, there is a one-to-one correspondence between the set $F$ of Verma vectors of $L(\lambda)$ and the set $\mathrm{KN}_{\lambda}(4)$ of KN tableaux of $\mathfrak{sp}_4$, hence
	\[
	|F| = |\mathrm{KN}_{\lambda}(4)|.
	\]
	Combined with Proposition \ref{prop:dimension}, we obtain
	\[
	\dim L(\lambda) = |F|.
	\]
	Since the Verma vectors in $F$ are linearly independent, $F$ forms the Verma basis of $L(\lambda)$. We summarize this result in the following theorem.
	
	\begin{theorem}\label{Verma basis}
		The set $F$ of Verma vectors is the Verma basis of the finite dimensional irreducible representation $L(\lambda)$ of $\mathfrak{sp}_4$.
	\end{theorem}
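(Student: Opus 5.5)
The statement of Theorem~\ref{Verma basis} is a direct consequence of the three ingredients already in place. The plan is simply to assemble them: linear independence, a count of the elements of $F$, and the dimension of $L(\lambda)$.

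First, I will invoke the preceding theorem that all Verma vectors $\bm{f^{a}}v_\lambda$ are linearly independent in $L(\lambda)\subseteq W$. That argument rests on Proposition~\ref{largest tableau} and Lemma~\ref{linear-independence}: each Verma vector has leading term $u(T(\bm{a}))$ with coefficient $1$, and the leading terms are distinct basis vectors of $W$, totally ordered by Definition~\ref{total order}. I also need the vectors to be distinct, which holds because distinct $\bm{a}$ give distinct leading terms. Hence $F$ is a linearly independent subset of $L(\lambda)$ of cardinality $|F|$.

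Second, I will use Theorem~\ref{KN-correspondence}. The map $\varphi:\mathrm{KN}_\lambda(4)\to F$ is a bijection, so $|F|=|\mathrm{KN}_\lambda(4)|$. Proposition~\ref{prop:dimension} then gives $|\mathrm{KN}_\lambda(4)|=\dim L(\lambda)$. Thus $F$ is a linearly independent set in the finite-dimensional space $L(\lambda)$ whose size equals $\dim L(\lambda)$, and so it spans and is a basis. Since every element of $F$ has the form $f_1^{a_4}f_2^{a_3}f_1^{a_2}f_2^{a_1}v_\lambda$, with the word $r_1r_2r_1r_2$ a reduced expression of $w_0$ in the Weyl group of type $C_2$, this is by definition a Verma basis.

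The only delicate point is that the dimension count via Proposition~\ref{prop:dimension} relies on crystal basis theory. If a self-contained check were wanted, I would instead compare $|F|$ against the Weyl dimension formula for $C_2$:
\[
\dim L(\lambda)=\tfrac{1}{6}(m_1+1)(m_2+1)(m_1+m_2+2)(m_1+2m_2+3).
\]
This means summing the number of lattice points cut out by the inequalities \eqref{verma inequalities}. The main obstacle there is the floor in $\min\{\tfrac12(a_2+m_1),a_2\}$, so I would split the sum over $a_2$ by parity and by whether $a_2<m_1$. Since the paper already accepts Proposition~\ref{prop:dimension}, I will rely on it, and the proof reduces to the two-line combination above.
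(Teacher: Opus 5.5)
Your proposal is correct and is the same argument the paper gives. It combines three earlier results: linear independence from the preceding theorem, $|F|=|\mathrm{KN}_\lambda(4)|$ from Theorem~\ref{KN-correspondence}, and $|\mathrm{KN}_\lambda(4)|=\dim L(\lambda)$ from Proposition~\ref{prop:dimension}.

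One small correction: the coefficient of $u(T(\bm a))$ in $\bm{f^{a}}v_\lambda$ is really $a_1!\,a_2!\,a_3!\,a_4!$, as the paper's own computation produces, not $1$. This does not matter, since only its nonvanishing is used. Your observation that distinct $\bm a$ give distinct vectors is a worthwhile point that the paper leaves implicit.
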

	
	\begin{example}
		All the Verma vectors in Example \textnormal{\ref{example}} form the Verma basis of $L(\lambda)$ with the highest weight $\lambda = 3\epsilon_1 + 2\epsilon_2$.
	\end{example}
	
	
	\section*{Acknowledgments}
The authors are partially supported by NSFC (Grant No. 12161090).

\end{document}